\definecolor{battleshipgrey}{rgb}{0.52, 0.52, 0.51} 
\theoremstyle{plain}
\newtheorem{theorem}{Theorem}[section]
\newtheorem{lemma}[theorem]{Lemma}
\newtheorem{proposition}[theorem]{Proposition}
\newtheorem{definition}[theorem]{Definition}
\theoremstyle{remark}
\newtheorem{example}{Example}[section]
\newtheorem*{acknowledgment}{Acknowledgment}
\numberwithin{equation}{section}
\newcommand{\bA}{\mathbb{A}}
\newcommand{\K}{\mathbb{K}}
\newcommand{\R}{\mathbb{R}}
\newcommand{\Q}{\mathbb{Q}}
\newcommand{\Z}{\mathbb{Z}}
\newcommand{\N}{\mathbb{N}}
\newcommand{\PP}{\mathbb{P}}
\newcommand{\Gl}{\mathrm{Gl}}
\newcommand{\Sym}{\mathrm{Sym}}
\newcommand{\Aut}{\mathrm{Aut}}
\newcommand{\End}{\mathrm{End}}
\newcommand{\Co}{\mathrm{Co}}
\newcommand{\id}{\mathrm{id}}
\newcommand{\Istr}{\mathrm{Istr}}
\newcommand{\Str}{\mathrm{Str}}
\newcommand{\Cau}{\mathrm{Cau}}
\newcommand{\eps}{\varepsilon}
\newcommand{\ssk}{\smallskip}
\newcommand{\nin}{\noindent}
\begin{document}

\title{Cyclic orders defined by ordered Jordan algebras}


%

\author{Wolfgang Bertram}
\address{Institut \'{E}lie Cartan de Lorraine \\
Universit\'{e} de Lorraine at Nancy, CNRS, INRIA \\
B.P. 70239 \\
F-54506 Vand\oe{}uvre-l\`{e}s-Nancy Cedex, France}

\email{\url{wolfgang.bertram@univ-lorraine.fr}}

\begin{abstract}
We define a general notion of  {\em partially ordered Jordan algebra} (over a partially ordered ring), and
we show that the Jordan geometry associated to such a Jordan algebra admits a natural  invariant
{\em partial cyclic order}, whose intervals are modelled on the {\em symmetric cone} of the Jordan algebra.
We define and describe, by affine images of intervals, the {\em interval topology} on the Jordan geometry,
and we outline a research program  aiming at generalizing main features of the theory of classical symmetric cones and
bounded symmetric domains.
\end{abstract}

\subjclass[2010]{  
06F25,  	
15B48, 	
17C37, 	
32M15,  	
53C35, 	
51G05  	
}

\keywords{(partial) cyclic order, partial order, (symmetric) cone, partially ordered ring, interval topology,
 (partially ordered) Jordan algebra, Jordan geometry}

\maketitle

\section*{Introduction}

Some algebraic structures fit well with {\em partial orders}, and others less.
Thanks to their algebraic origin by ``squaring operations'',
{\em Jordan algebras} have a very privileged
 relation with partial orders, and this interplay has been studied for a long time.
However, to my best knowledge, so far no general notion of {\em partially ordered Jordan algebra} has appeared in the
literature: only special cases, the (finite dimensional) {\em Euclidean Jordan algebras} (cf.\ \cite{FK94}) and
their analogs in the Banach-setting (cf.\ \cite{Up}) have been thoroughly studied.
A first aim of this paper is to give general definitions, following the ``partial order-philosophy'' 
(po-phi), of the following items:

\begin{itemize}
\item
 por: partially ordered ring (Def.\ \ref{def:por}),
\item
pom: partially ordered module (over a por; Def.\ \ref{def:pom}), with special case the well-known
povs (partially ordered vector spaces),
\item
poJa: partially ordered Jordan algebra (a  pom over a por whose {\em quadratic operators} preserve the
partial order; Def.\ \ref{def:poJa}),
\item
pco: 
partial cyclic order ( cf.\ Def.\ \ref{def:pco}).
\end{itemize}
The last item leads to the second topic of this work:
cyclic orders. As Coxeter puts it (\cite{Co47}, p. 31):
{\it 
The intuitive idea of the two opposite directions along a line, or a round circle, is so familiar that we are apt to
overlook the niceties of its theoretical basis.} Indeed,
``geometric'' spaces often look somehow like a circle,  are compact, or compact-like, and
cannot be reasonably ordered in the usual sense.
But, according to ideas going back to geometers of the 19-th century (cf.\ \cite{Co47, Co49}), 
linear orders can successfully be replaced by {\em cyclic orders}. The archetypical example is the circle,
$M=S^1$. Let us say that a triple $(a,b,c) \in M^3$ is {\em cyclic}, and write
$(a,b,c) \in R$, if 
``$(a,b,c)$ occur  in this  order when running counter-clockwise around the circle'', as in Figure \ref{f:1},
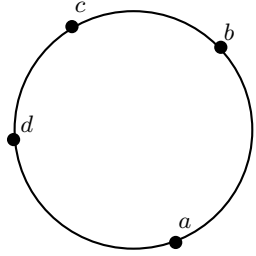
\begin{figure}[h]
\caption{The circle, with cyclically ordered $(a,b,c)$, $(b,d,a)$...}\label{f:1}
\newrgbcolor{xdxdff}{0.490196078431 0.490196078431 1.}
\psset{xunit=0.5cm,yunit=0.5cm,algebraic=true,dimen=middle,dotstyle=o,dotsize=3pt 0,linewidth=0.8pt,arrowsize=3pt 2,arrowinset=0.25}
\begin{pspicture*}(-4.3,-2.74)(14.44,4.1)
\pscircle(1.32,0.62){1.58}
\begin{scriptsize}
\psdots[dotsize=5pt 0,dotstyle=*,linecolor=black](3.64,2.82)
\rput[bl](3.72,3.02){{$b$}}
\psdots[dotsize=5pt 0,dotstyle=*,linecolor=black](-0.313239714564,3.36862293427)
\rput[bl](-0.24,3.76){{$c$}}
\psdots[dotsize=5pt 0,dotstyle=*](-1.86674736438,0.361076776644)
\rput[bl](-1.68,0.56){{$d$}}
\psdots[dotsize=5pt 0,dotstyle=*](2.44482993764,-2.37285108407)
\rput[bl](2.52,-2.00){{$a$}}
\end{scriptsize}
\end{pspicture*}
\end{figure}
where, e.g., $(a,b,c) \in R$, $(b,d,a) \in R$. 
There is a simple axiomatic definition of a general 
\href{https://en.wikipedia.org/wiki/Cyclic_order}{\em (partial) cyclic order} \footnote{ hyperlinks in grey in the 
electronic version; the wikipedia page on cyclic orders contains many further references} on a set $M$ (Def.\ \ref{def:pco}),
and the topic of general cyclic orders has been studied by many authors.

\ssk
Now, the main point of the present work is to establish a link between the preceding items:
on the one hand, Jordan algebras $V$ correspond to {\em Jordan geometries} $X = X(V)$ (I recall their definition in Section
\ref{sec:Jordangeo}); in the classical
case (Euclidean Jordan algebras), the underlying space $X$ is compact (typically, Grassmannian or Lagrangian
varieties); in the more general cases
(Banach, or general poJa) it won't be compact, but still  look ``compact-like'' -- a sort of non-compact analog of a circle.
Our main result fits  well with this impression: we show (Theorem \ref{th:main}):
{\em
Every poJa $V$ gives rise to a partial cyclic order $R$ on its Jordan geometry $X(V)$, which is natural in the
sense that it is invariant under the ``inner conformal'' or ``Kantor-Koecher-Tits group'' of $V$, and that
all intervals defined by $R$ are modelled on the symmetric cone $\Omega$ of the poJa $V$.}
We also give a description of  intervals $]a,b[$ by their {\em
affine images} (intersection with affine part $V\subset X$;
Theorem \ref{th:affineimage}): 
when $b=\infty$, 
the image  is ``parabolic'', a translation of the symmetric cone $\Omega$;
when $a,b \in V$ and $a<b$ in $V$, then the image is ``elliptic'': complete, of the form
$(a+\Omega) \cap (b-\Omega)$, see Figure \ref{f:2}.
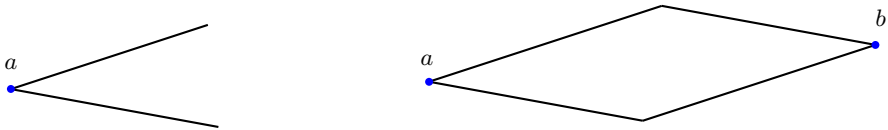
\begin{figure}[h]
\caption{Images of an interval:
parabolic  (left), elliptic  (right).}\label{f:2}
\newrgbcolor{zzttqq}{0.6 0.2 0.}
\psset{xunit=0.7cm,yunit=0.7cm,algebraic=true,dimen=middle,dotstyle=o,dotsize=3pt 0,linewidth=0.8pt,arrowsize=3pt 2,arrowinset=0.25}
\begin{pspicture*}(-2.3,0.5)(16.3,3.7)
\psline(-1.46,1.48)(2.28,2.7)
\psline(-1.46,1.48)(2.48,0.76)
\psline(6.48,1.62)(10.9002176116,3.06188916742)
\psline(10.9002176116,3.06188916742)(14.96,2.32)
\psline(14.96,2.32)(10.5397823884,0.878110832578)
\psline(6.48,1.62)(10.5397823884,0.878110832578)
\begin{scriptsize}
\psdots[dotstyle=*,linecolor=blue](-1.46,1.48)
\rput[bl](-1.58,1.86){{$a$}}
\psdots[dotstyle=*,linecolor=blue](6.48,1.62)
\rput[bl](6.32,1.94){{$a$}}
\psdots[dotstyle=*,linecolor=blue](14.96,2.32)
\rput[bl](14.96,2.68){{$b$}}
\end{scriptsize}
\end{pspicture*}
\end{figure}

\nin
In all other cases for $(a,b)$, the affine image of $]a,b[$ is ``hyperbolic'' -- 
hard to represent in a two-dimensional drawing; see Figure \ref{f:3} in the very simple example of a
torus (Example \ref{ex:torus}) for a special case. 
The proofs of these two theorems
 are straightforward and do not need any deep analytic or order-theoretic tools --
the main ideas can already be found in 
Section XI of \cite{Be00} (if I had known about the notion of cyclic order at that time, I certainly would have 
formulated Theorem XI.3.3 loc.cit.\ in these terms).
The intervals and their affine images are used to define and study the {\em interval}, or {\em order topology}
on $X$, which simply is the topology generated by the intervals (Def.\ \ref{def:intervaltopology}).
In the classical cases, it coincides with the usual topology (Theorem \ref{th:topology}).
In the final Section \ref{sec:further}, I outline a list of open problems, which in my opinion represents a rather
promising research program aiming to generalize  the classical theory of
symmetric cones and bounded symmetric domains:

\begin{enumerate}
\item
{\sl generalized tube domains and bounded symmetric domains,}
\item
{\sl
compact dual: symmetric $R$-spaces, Borel imbedding,}  
\item
{\sl  define and study the boundary of symmetric cones and intervals;}
\item
{\sl structure theory of poJas's}, including notions of
{\sl traces, and states,}
\item
{\sl relation with invariats (cross-ratio, Maslov index)},
\item
{\sl duality}; and {\sl what makes a cone ``symmetric''?}
\end{enumerate}

\nin
As to the last item,
we have used above the term {\em symmetric cone} by generalizing the classical one (\cite{FK94}); indeed, 
the intervals $]a,b[$ of the cyclic order on $X$
 are symmetric about any of their points (for any $y \in ]a,b[$ there is an order-reversing
bijection of order two fixing $y$), hence the term ``symmetric cone'' seems well deserved.
However, it does not imply existence and isomorphism with some ``dual cone in a dual vector space''. 
This degree of generality may seem excessively wide for many readers, but
I think the domains of Jordan theory and order theory are today ripe enough to be treated in full generality.
The principal merit of this degree of generality may be to clarify the interplay between geometry, algebra,
and analysis in this realm, and, hopefully, to lead to a better understanding of all three of them.

\begin{acknowledgment}
This work has been triggered by discussions during the workshop 
``\href{http://www.lorentzcenter.nl/lc/web/2017/897/info.php3?wsid=897&venue=Snellius}{Order Structures, Jordan Algebras and Geometry}'' held in Lorentz Center Leiden in may-june 2017, and I would like to thank the organizers and the staff of the 
Lorentz Center for
making possible this pleasant and fruitful workshop.
\end{acknowledgment}

\section{Ordered rings and  algebras}

\subsection{Partial orders}
A {\em partial order on a set $M$} is as usual defined  to be a binary relation $<$  that is
asymmetric and transitive.
If we denote (the graph of) this relation by $L = \{ (a,b) \in M^2 \mid a<b\}$, then
asymmetry means $L \cap L^{-1}=\emptyset$ and transitivity $L \circ L \subset L$, where
$\circ$ is relational composition, and $L^{-1}$ is the reverse of a relation $L$.
The relation
$ L^{eq} := L \cup \Delta_M
$,
where
$\Delta_M=\{ (a,a)\mid a \in M\}$ is the (graph of) the identity relation, is denoted as usual
by $\leq$.
For $(a,b) \in M^2$, the  {\em (open, resp. closed) interval} between $a$ and $b$ is denoted 
in the ``French way''  by
\begin{align*}
]a,b[ & := \{ x \in M \mid a <x < b \} = \{ x \in M \mid (a,x) \in L, (x,b) \in L \} ,
\\
[a,b] & := ]a,b[ \cup \{ a, b \} = \{ x \in M \mid a \leq x \leq b \}.
\end{align*}
We speak of a {\em total}
linear order, if, moreover, 
$$
M \times M = \Delta_M \cup L \cup L^{-1}. 
$$
In this work, by ``order'' or ``ordered'' we shall always mean ``partial order'', resp.\ ``partially ordered'',
 and we shall always work with $<$ as basic relation, rather than with $\leq$.


\subsection{Ordered and square ordered rings} 
The first part of the following definition is  \href{https://en.wikipedia.org/wiki/Partially_ordered_ring}{standard}:

\begin{definition}\label{def:por}
A {\em partially  ordered ring (por)} is a ring $(\bA,+,\cdot)$ together with a partial order $<$ on $\bA$ such that:
\begin{enumerate}
\item
$\forall a,b,c \in \bA$: $a < b \Rightarrow a + c <  b +c$,
\item
$\forall a,b,c  \in \bA$: $(0 < b$ and $a < c)  \Rightarrow  (ba < bc$ and $ab<cb)$.
\end{enumerate}

\nin
If the order is total, then the ring will be called
{\em totally ordered}.
If the ring has a unit element $1$, then we will always assume that $0 < 1$.
We say that $\bA$ is {\em square-ordered} if, moreover, invertible squares are positive in the following sense:
\begin{enumerate}
\item[(3)]
$\forall a \in \bA^\times$ (invertible elements):
$0<a^2$,
\end{enumerate}
and we say that $\bA$ is an {\em inverse por} if all positive elements are invertible:
\begin{enumerate}
\item[(4)]
$\forall a >0$: $a \in \bA^\times$.
\end{enumerate}
\end{definition}

\begin{example}\label{ex:por}
$\K = \R, \Q$ or $\Z$ with their usual order are totally (square) ordered rings (but $\Z$ is not an inverse por);
the direct product of ordered rings is an ordered ring;
rings of functions with values in an ordered ring form an ordered ring;
the {\em ring of dual numbers}
$$
\K = \R[X]/(X^2) = 
\R[\eps] = \R \oplus \eps \R, \quad (x+\eps y)(x' + \eps y')= xx' + \eps (xy' + yx')
$$
is partially ordered by letting $x + \eps y > x' +\eps y'$ iff $x >x'$. 
In the same way it is seen that, if  $\bA$ is an ordered ring, 
then $\bA[\eps] = \bA \oplus \eps \bA$ is an ordered ring. 
Properties (3) and (4) behave well with respect to these constructions.
\end{example}

\begin{example}
For an ordered field, (1) and (2) implies (3), but for an ordered ring, this need not be the case: 
consider $\K=\R$ (or $\K ={\mathbb C}$)  with the ``trivial partial order'' given by
$L = \{ (x,x+n) \mid x \in \R, n \in \N \}$; in other words, $x>0$ iff $x \in\N$. 
It satisfies (1), (2), but not (3). 
\end{example}

\begin{example}\label{exa:C}
Since $-1$ cannot be a square in a square ordered ring, the ring
$$
\bA[i]:= \bA \oplus i \bA, \quad  (x+iy)(x' + i'y') = (xx' - yy') + i (xy' + yx') 
$$
carries no structure of square ordered ring. 
When $\bA = \R[\eps]$, then 
$\bA[i]= {\mathbb C}[\eps]$, and when $\bA=\Z$, then $\bA[i]$ is the ring of Gaussian integers. 
\end{example}

\subsection{Ordered modules and convex cones}
The following is the precise analog of notions of 
\href{https://en.wikipedia.org/wiki/Ordered_vector_space}{\em (partially) ordered vector spaces}:

\begin{definition}[Ordered module]\label{def:pom} 
Assume $(\K,<)$ is a commutative por.
A $\K$-module $V$ is called a {\em partially ordered module (pom)} if it carries a partial order $<$ such that
\begin{enumerate}
\item
$\forall a,b,c \in V$: $a < b \Rightarrow a + c <  b +c$,
\item
$\forall a, c  \in V$, $\forall \beta  \in \K$: $(0 < \beta$ and $a < c)  \Rightarrow  \beta a <  \beta c$.
\end{enumerate}
The set $\Omega:= \{ a \in V \mid 0 < a \}$ is called the {\em positive cone}.
\end{definition}

\begin{proposition}
Let $V$ be a partially ordered module, and $\Omega$ as above. Then
\begin{enumerate}
\item
$\forall a,b \in \Omega : a + b \in \Omega$
\item
$\forall a \in \Omega$, $\forall \beta \in \K$, $\beta >0 \Rightarrow \beta a \in \Omega$
\item
$\Omega \cap (- \Omega) = \emptyset$.
\end{enumerate}
Conversely, given a subset $C $ of $ V$ having these properties,
$a < b$ iff $b-a \in C$ defines on $V$ the structure of   an ordered module.
The order is total iff $V = C \cup \{ 0 \} \cup (-C)$.
\end{proposition}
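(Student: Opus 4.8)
The plan is to verify the three properties for $\Omega$ first, then establish the converse, then the totality characterization; none of this is deep, so the "main obstacle" is merely bookkeeping — keeping straight which module axiom or which order axiom is being invoked.

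For the forward direction: property~(1) is immediate from Definition~\ref{def:pom}(1). If $a,b\in\Omega$, i.e.\ $0<a$ and $0<b$, then translating the second inequality by $a$ gives $a<a+b$, and combining with $0<a$ by transitivity yields $0<a+b$. Property~(2) is exactly Definition~\ref{def:pom}(2) applied with the inequality $0<a$ and the scalar $\beta>0$, giving $0=\beta\cdot 0<\beta a$ (using that $\beta\cdot 0=0$ in a module). Property~(3): if $a\in\Omega\cap(-\Omega)$ then $0<a$ and $0<-a$; translating the first by $-a$ gives $-a<0$, so $0<-a$ and $-a<0$, contradicting asymmetry of the order. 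Hence $\Omega\cap(-\Omega)=\emptyset$.

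For the converse: given $C\subset V$ with the three properties, define $L=\{(a,b)\mid b-a\in C\}$. Asymmetry: if $(a,b)\in L\cap L^{-1}$ then $b-a\in C$ and $a-b\in C$, i.e.\ $b-a\in C\cap(-C)=\emptyset$, impossible. Transitivity: if $b-a\in C$ and $c-b\in C$ then $(c-b)+(b-a)=c-a\in C$ by property~(1). So $<$ is a partial order. Translation-invariance Definition~\ref{def:pom}(1): $(b+c)-(a+c)=b-a$, so $a<b$ iff $a+c<b+c$. Scalar compatibility Definition~\ref{def:pom}(2): if $0<\beta$ in $\K$ and $a<c$ in $V$, then $c-a\in C$, hence $\beta(c-a)=\beta c-\beta a\in C$ by property~(2), i.e.\ $\beta a<\beta c$. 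Thus $(V,<)$ is an ordered module, and one checks the positive cone of this order is $\{a\mid a-0\in C\}=C$, so the two constructions are mutually inverse.

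Finally, totality: by definition the order is total iff $V\times V=\Delta_V\cup L\cup L^{-1}$. Given $(a,b)\in V^2$, set $v=b-a$; then $(a,b)\in L$ iff $v\in C$, $(a,b)\in\Delta_V$ iff $v=0$, and $(a,b)\in L^{-1}$ iff $a-b=-v\in C$ iff $v\in -C$. Since every element of $V$ arises as such a difference, the decomposition of $V^2$ holds for all pairs iff $V=C\cup\{0\}\cup(-C)$, which is the asserted condition (the three pieces being disjoint by property~(3) and by $0\notin C$, the latter since $0\in C$ would force $0\in C\cap(-C)$). This completes the proof.
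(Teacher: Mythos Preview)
Your proof is correct and is precisely the ``standard argument from the real case'' that the paper invokes without writing out; the paper's own proof consists of a single sentence deferring to that argument, so your approach is the same, just made explicit.
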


\begin{proof}
All standard arguments from the real case go through. Recall that (1) is related both to transitivity and to translation invariance
of the partial order. 
\end{proof}

\begin{definition}
A {\em convex cone} in a partially ordered $\K$-module $V$ is a subset $C \subset V$ satisfiying (1), (2); it is called {\em salient} if it
satisfies also (3), and {\em acute} if it contains no affine line. 
\end{definition}

\begin{definition}
A subset $S \subset V$ in an ordered $\K$-module $V$ is called {\em convex} if it satisfies one of the following equivalent
conditions:
\begin{enumerate}
\item[(c1)]
$\forall a,b \in S, \,  \forall t \in [0,1] : \quad (1-t) a + tb \in S$,
\item[(c2)]
$\forall a,b \in S, \, \forall t \in \, \, ]0,1[ \,  : \quad (1-t) a + tb \in S $.
\end{enumerate}
\end{definition}

Obviously, a convex cone is convex as a set. 
Non-convex cones are rarely considered in the literature; an exception is \cite{FG96}. 
Summing up, we have a bijection between order structures on modules over ordered rings
and salient convex cones in such modules. With the suitable definitions,
this bijection can be turned into an equivalence of categories. 
Note that translations are isomorphisms of $<$, and multiplications by positive elements defines endomorphisms of $<$.

\begin{example} In the special case $V = \K$, 
por-structures on $\K$ are in bijection with subsets $C \subset \K$ such that
$C + C \subset C$, $C \cdot C \subset C$, $C \cap (-C) = \emptyset$.
We get a square order if $a^2 \in C$ for all $a \in \K^\times$, and an inverse por if $C \subset \K^\times$.
\end{example} 

\begin{example}
Every linear form $\phi : V \to \K$ defines a partial order by letting
$a < b$ iff $\phi(a) <\phi(b)$. If $\phi \not= 0$,
the cone of $v$ is a ``wedge'', or ``tube'',  modelled on the cone of $\K$
(half-space if $\K$ is totally ordered). It is salient but not acute. 
\end{example}

\subsection{Jordan algebras}\label{ssec:Ja}
A standard reference is \cite{McC}:
let $\K$ be a commutative ring containing $\frac{1}{2}$.
A {\em linear Jordan algebra (over $\K$)} is a $\K$-module $V$ with a bilinear product
$V^2 \to V$, $(a,b) \mapsto a\bullet b$ satisfying the identities
\begin{enumerate}
\item[(J1)]
$a \bullet b = b \bullet a$,
\item[(J2)]
$a \bullet (a^2 \bullet b) = a^2 \bullet (a \bullet b)$ where $x^2 = x \bullet x$.
\end{enumerate}
We assume that $V$ contains a {\em unit element $e\not= 0$}. 
In a linear Jordan algebra, one defines the {\em quadratic operator $Q_a$} by
\begin{equation}
Q_a(x) = (2 L_a^2 - L_{a^2})(x) = 2 (a \bullet (a \bullet x)) -  a^2 \bullet x
\end{equation}
and its linearization 
\begin{equation}
D_{a,x}(b) = (Q_{a+b} - Q_a - Q_b)(x) = 2 (a(bx)-(ab)x+x(ab)) .
\end{equation}
Then the following holds (see, e.g., \cite{McC}):
\begin{enumerate}
\item[(U)] (unit) $Q_e = \id_V$,
\item[(FF)] (fundamental formula) 
$Q_{Q_a(b)} = Q_a Q_b Q_a$,
\item[(CF)] (commutation formula)
$Q_a D_{b,x} = D_{x,b} Q_a$
\end{enumerate}
By definition, a {\em quadratic Jordan algebra} is a $\K$-module $V$ together with a quadratic map
$Q:V \to \End_\K(V)$, $a \mapsto Q_a$ satisfying, in all scalar extensions of $\K$, (U), (FF) and (CF). 
This definition even makes sense when $2$ is not invertible in $\K$.
The squaring operation is recovered by
$a^2 = Q_a(e)$. 

\begin{definition}
An element $a$ of a (quadratic) Jordan algebra is called {\em invertible} if
$Q_a:V \to V$ is bijective, and its {\em inverse} is then defined by
$$
a^{-1} := Q_a^{-1}(a) .
$$
The set of invertible elements in $V$ is denoted by $V^\times$.
\end{definition}

The set $V^\times$ is stable under the binary map
$(a,b) \mapsto s_x(y) = Q_x (y^{-1}) = Q_x Q_y^{-1} y$, which satisfies the algebraic properties of a 
{\em symmetric space} from \cite{Lo69}:

\begin{enumerate}
\item
$s_x^2 =\id$
\item
$s_x(x)=x$
\item
$s_x s_y s_x = s_{s_x(y)}$
\item
the differential of $s_x$ at $x$
is $-\id_V$.
\end{enumerate}
 (The differential can be defined in a purely algebraic way by scalar extension via dual numbers.)
In finite dimension over a field, $V^\times$ is Zariski-dense in $V$, and then $V^\times$ is a
{\em quadratic prehomogeneous symmetric space} in the sense of \cite{Be00}, Chapter II. 
The term ```quadratic'' means that  the quadratic operator coincides with the quadratic representation of the
symmetric space as defined in \cite{Lo69}.
Indeed, $s_e$ is the Jordan inversion map, and
$ s_x s_e = Q_x$. 
 
\begin{example}
Any associative algebra $\bA$ becomes a  quadratic Jordan algebra with
$Q_a(b) = aba$. The Jordan powers $a^k$ agree with the usual ones. The symmetric space structure comes from
the group structure of $\bA^\times$ via $s_xy = xy^{-1} x$.
\end{example}

\subsection{Ordered Jordan algebras and their symmetric cones}
To define a notion of ordered Jordan algebra, 
it would be misleading to simply copy the definition of a partially ordered ring:
even in a Euclidean Jordan algebra, it is in general not true that $a>0$ and $b>0$ implies
$a\bullet b >0$. 
\footnote{ Here is a counterexample:
let $V = \Sym(2,\R)$ with its cone of positive definite
 symmetric matrices. The following
$A,B$ are positive definite, but $AB+BA$ is not:
$$
A = \begin{pmatrix} 2 & 1\\1 & 1 \end{pmatrix}, \quad
B = \begin{pmatrix} 1 & 0\\ 0 & 9 \end{pmatrix}, \quad 
AB+BA = \begin{pmatrix} 4 & 10 \\ 10  &  18 \end{pmatrix} \, .
$$
}

\begin{definition}\label{def:poJa}
A {\em partially  odered Jordan algebra (poJa)}  is a unital Jordan algebra $V$ which is a partially ordered module 
over a commutative por $\K$, such that:
\begin{enumerate}
\item[{\rm (OJ0)}] $e>0$ (where $e$ is the unit element),
\item[{\rm (OJ1)}] $\forall a>0$: $a \in V^\times$,
\item[{\rm (OJ2)}]
$\forall a \in V, \forall b \in V^\times$: $a>0 \Rightarrow Q_b(a)>0$.
\end{enumerate}
We call {\em symmetric cone of $V$} the  positive cone 
$$
\Omega =  \{ a \in V \mid \, a>0  \} . 
$$
\end{definition}

\begin{lemma}
The symmetric cone $\Omega$ of a poJa is a sub-symmetric space of $V^\times$: it is stable under the
binary map $(x,y) \mapsto s_x(y) = Q_x (y^{-1})$ (one may call it a {\em convex prehomogeneous symmetric space}). 
It contains all squares of invertible elements.
\end{lemma}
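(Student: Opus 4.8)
The plan is to verify the three assertions in turn, using only the axioms (OJ0)--(OJ2), the fundamental formula (FF), and the definition of the inverse.

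First I would check that $\Omega$ is stable under $(x,y) \mapsto s_x(y) = Q_x(y^{-1})$. Let $x, y \in \Omega$. By (OJ1), both $x$ and $y$ are invertible, so $y^{-1} = Q_y^{-1}(y)$ makes sense. The key point is that $y^{-1} > 0$: indeed $y^{-1} = Q_{y^{-1}}(y)$ (a standard Jordan identity, or: apply $Q_y^{-1} = Q_{y^{-1}}$, which follows from (FF) since $Q_{Q_y(y^{-1})} = Q_y Q_{y^{-1}} Q_y$ and $Q_y(y^{-1}) = y$ gives $Q_y = Q_y Q_{y^{-1}} Q_y$, hence $Q_{y^{-1}} = Q_y^{-1}$), and $y^{-1}$ is invertible (its inverse is $y$), so by (OJ2) applied with $a = y > 0$ and $b = y^{-1} \in V^\times$ we get $Q_{y^{-1}}(y) > 0$, i.e.\ $y^{-1} \in \Omega$. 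Now $x \in V^\times$ and $y^{-1} > 0$, so (OJ2) again gives $Q_x(y^{-1}) > 0$, i.e.\ $s_x(y) \in \Omega$. This also shows $\Omega \subset V^\times$ is stable under the symmetric-space structure, so it is a sub-symmetric space.

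Next, that $\Omega$ contains all squares of invertible elements: for $a \in V^\times$ we have $a^2 = Q_a(e)$, and by (OJ0) $e > 0$, while $a \in V^\times$, so (OJ2) gives $Q_a(e) > 0$, i.e.\ $a^2 \in \Omega$. (In particular $e = e^2 \in \Omega$, consistent with (OJ0).)

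I do not expect a serious obstacle here; the only mildly delicate step is the identity $Q_{y^{-1}} = Q_y^{-1}$ and the resulting $y^{-1} > 0$, which one must invoke rather than derive at length, but it is immediate from (FF) as indicated. Everything else is a direct application of the closure axiom (OJ2). One should perhaps remark that the term ``convex'' in ``convex prehomogeneous symmetric space'' refers to the fact, established via the bijection between orders and salient convex cones, that $\Omega$ is a convex subset of $V$; this is not part of the present lemma's proof but motivates the terminology.
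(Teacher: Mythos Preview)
Your proof is correct and follows essentially the same approach as the paper: use (OJ1) to get $\Omega\subset V^\times$, observe $y^{-1}=Q_{y^{-1}}(y)>0$ via (OJ2), then apply (OJ2) again to get $Q_x(y^{-1})>0$, and finally $a^2=Q_a(e)>0$ from (OJ0) and (OJ2). The paper's version is terser (it simply cites $Q_y^{-1}=Q_{y^{-1}}$ without the (FF) derivation) and notes in passing that in fact $s_x(y)\in\Omega$ for all $x\in V^\times$, not just $x\in\Omega$; but your argument already yields this since your second application of (OJ2) only uses $x\in V^\times$.
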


\begin{proof}
By (OJ1), $\Omega \subset V^\times$, and by (OJ2), 
if  $y>0$, then $y^{-1} = (Q_y^{-1})(y)=Q_{y^{-1}}(y) >0$, so
$s_x(y)>0$ for all $x \in V^\times$.
Since $e>0$, we have $x^2 = Q_x(e) >0$ for all invertible $x$, by (OJ2).
\end{proof}

\begin{example}
Let $V=\K=\Q$. The smallest possible symmetric cone is
$\Omega_0 = \{ \sum_{i=1}^n  x_i^2 \mid n\in \N,  \forall i: x_i \in \Q^\times  \}$
which equals the positive cone for the usual (total) order. Note that this cone is strictly bigger than the set of
invertible squares.  
\end{example}

\begin{example}
Any real Euclidean Jordan algebra $V$ is a poJa, and $\Omega$ coincides with its associated symmetric cone 
as defined in \cite{FK94}
(and hence our terminology should not lead to conflicts). 
Indeed, in this case $\Omega = \{ x^2 \mid x \in V^\times\}$.
Likewise, for a Jordan-Banach algebra, $\Omega$ coincides with its usual symmetric cone.
\end{example}

\begin{example}[Positive definite quadratic forms]
All classical Euclidean Jordan algebras can also be defined over the field $\Q$, and over any ordered
ring extension $\K$ of $\Q$, such as dual numbers.
For instance, the classical Jordan algebra $\Sym(n,\R)$ of symmetric $n\times n$-matrices gives rise to 	a $\K$-poJa
 $\Sym(n,\K)$.

\ssk
More conceptually: if $\K$ is an ordered ring, a {\em quadratic form on $\K^n$} is a map
$q:\K^n \to \K$, homogeneous of degree 2 and such that
$d(x,y):=q(x+y)-q(x)-q(y)$ is bilinear; we say that $q$ is {\em positive definite} if
$q(v) >0$ for all $v \in (\K^n)^\times$, where
$$
(\K^n)^\times := \{ v \in \K^n \mid \exists \phi : \K^n \to \K \mbox{ linear map sth. } \phi(v) \in \K^\times \} .
$$
If $\K$ is square ordered, then the {\em standard form}
$q_0(x)=\sum_i x_i^2$ is positive definite, and every other form can be represented by a matrix in the usual way.
Symmetric positive definite matrices then correspond to positive definite forms, and they form the symmetric cone
of a poJa over $\K$. 
\end{example}

\begin{example}
The one-dimensional Jordan algebra $V=\K$ over $\K$ is a poJa if, and only if, $\K$ is a square-ordered inverse por. 
On the other hand, consider $\K=\Z$ and $V = \R$, with its usual  Jordan and cone structure. 
This example shows that  
invertible elements in the algebra need not be invertible in the base ring, so $\K$ need not be 
an inverse por. 
In general, if $E \subset V$ is an inclusion of unital Jordan algebras and
 $x \in E$, then $x$ may be invertible in 
$V$ and fail to be so in $E$.
\end{example}

\subsection{Formal reality}
Recall that a ($\R$-) Jordan algebra is called {\em formally real} if $\sum_{i=1}^n x_i^2 = 0$ implies that $x_i=0$ for all $i$.
Working with a general por $\K$, the following condition seems to be a reasonable analog of that condition:

\begin{definition}\label{def:formallyreal}
A poJa $V$ over a por $\K$ is called {\em formally real} the following is satisfied:
$\forall a \in V, \forall b \in V^\times$: $a^2 + b^2 > 0$
(in particular, $a^2+b^2$ is invertible).
\end{definition}

\begin{example}\label{ex:P*}
Assume that $\bA$ is a {\em $P^*$-algebra} (cf.\  \cite{Be17}, Appendix A),
 that is, a complex associative algebra with involution $*$ such that
$V = \{ a \in \bA \mid a^* = a \}$ is a poJa such that
$\forall a \in \bA, \forall b \in \bA^\times$: $a^* a + b^* b >0$.
Then $V$ satisfies the preceding condition.
For instance, any $C^*$-algebra is a $P^*$-algebra (but the converse is false).
\end{example}

\section{The Jordan geometry of a unital Jordan algebra}\label{sec:Jordangeo}

Let $V$ be a unital (quadratic) Jordan algebra over a commutative ring $\K$.
Then:

\subsection{Associated groups}
The {\em automorphism group} of $V$ is defined in the usual way, like for all algebras.
The {\em structure group} $\Str(V)$ of $V$ is defined by
\begin{equation}
\Str(V):= \{ g \in \Gl(V) \mid \,  \exists h \in \Gl(V):  \,
\forall x \in V: \, Q_{gx} = g \circ Q_x \circ h \} .
\end{equation}
It contains scalar multiples of the identity, and all invertible quadratic operators,
because of the fundamental formula (FF). Hence
$\Str(V)$ 
contains the {\em inner structure group}, which is the group
generated by all $Q_y$ with $y \in V^\times$:\footnote{ for general Jordan pairs,
the inner structure group is the group generated by the Bergmann operators $B(x,y)$ for quasi-invertible pairs
$(x,y)$; but for unital Jordan algebras, this comes down to the present definition because of the formula
$B(x,y) = Q_x Q_{x^{-1}-y}$, see \cite{Lo75}, I.2.12}
\begin{equation}
\Istr(V):= \langle Q_y \mid y \in V^\times \rangle \subset \Gl(V) .
\end{equation}
Finally, the {\em conformal}, or {\em Kantor-Koecher-Tits group} $\Co(V)$ associated to $V$ will
be defined below. The best and most rigourous definition of these groups is given in terms of
{\em Jordan pairs} (\cite{Lo75}), and constructing the {\em Kantor-Koecher-Tits (KKT) algebra} (thought of as the Lie algebra
of $\Co(V)$, see \cite{Lo95}) and \cite{BeNe04}. 

\subsection{Conformal completion and conformal group of $V$}
There exists a space $X=X(V)$, together with an imbedding $V \subset X(V)$, such that
\begin{enumerate}
\item
the Jordan inverse $x\mapsto x^{-1}$ extends to a map 
$j:X \to X$ which is of order two: $j \circ j = \id_X$,
\item
every translation $t_v:V \to V$, $x \mapsto x+v$ extends to a  bijective map
$X \to X$ (denoted by the same letter),
\item
every element $h \in \Str(V)$ extends to a bijective map $h:X \to X$,
\end{enumerate}

\nin
such that all relations satisfied by these maps on $V$ continue to hold for their extensions onto $X$. 
It follows that, for $v \in V$, we get another bijection
\begin{equation}
\tilde t_v := j \circ t_v \circ j : X \to X
\end{equation} 
such that $\tilde t_v \tilde t_w = \tilde t_{v+w}$.  In Loos' work \cite{Lo95},
the group
\begin{equation}\label{eqn:G0}
G = G_0(X) = \langle t_v, \tilde t_w \mid v,w \in V \rangle = \langle U^+,U^- \rangle
\end{equation}
 of bijections of $X$ generated by the (abelian) groups
$U^+ = \{ t_v \mid v \in V\}$ and $U^- = \{\tilde t_v \mid v \in V \}$, is called  the
{\em projective elementary group of $V$}, and
 is described by generators and relations.
It is realized as a subgroup of the automorphism group of the KKT-algebra.
The Jordan inverse $j$ also induces an automorphism of the KKT-algebra, but it need not be contained in
$G_0(X)$. However, it normalizes $G_0(X)$, and
 we may define slightly bigger groups:
\begin{equation}
G_1(X) = G_0(X) \cup  j G_0(X), \quad
\Co(V) := \langle G_0(X), \Str(V) \rangle,
\end{equation}
the latter corresponding to what is called the {\em conformal group} in \cite{Be00}.
All these groups act transitively on $X$. Let $o \in X$ be the origin of $0\in V$, and let
\begin{equation}
o' := \infty := j(o) \in X
\end{equation}
be its image under $j$ (``dual origin''). Then, as homogeneous space, 
\begin{equation}
X = G.o = G/P, \qquad X = G.o' = G.\infty = G/P',
\end{equation}
where $P'$ is a semidirect product of $t_V$ with the inner structure group (affine group), and
$P = jPj$ a semidirect product of $\tilde t_V$ with $\Istr(V)$.

\subsection{Transversality}
We say that a pair $(a,b)\in X \times X$ is {\em transversal}, and write $a \top b$, if there exists $g\in G$ with
$(a,b)=g.(o,\infty)=(g(o),g(\infty))$. Then, for every $a \in X$, the set
$a^\top = \{ b \in X \mid a \top b \}$, is an affine space (think of it as open dense in $X$, with complement
some set of ``points at infinity''). 
For any pair $(a,b) \in X^2$, let
\begin{equation}
U_{ab}:= a^\top \cap b^\top .
\end{equation}
For instance, if $(a,b) = (o,\infty)$, then $U_{ab} = V^\times$ is precisely the set of invertible elements in $V$,
and $U_{\infty,\infty} = V$. 
A triple $(a,b,c)$ with $a\top b, b\top c,a\top c$ is called a {\em transversal triple}.
For instance, $(o,e,\infty)$ is a transversal triple.
The existence of transversal triples distinguishes the Jordan geometries coming from unital Jordan algebras
among those coming from general Jordan pairs (and where
$X^+ = G/P$ and $X^-=G/P'$ are different spaces, thought of as ``dual to each other''). 

\subsection{Inversions}
For every pair $(a,b) \in X^2$, and every element $y \in U_{ab}$, there exists a unique element
$J^{ab}_y \in G_1(X)$ such that: 
\begin{equation}
J^{ab}_y(a)=b, \,
J^{ab}_y(b)=a, \,
J^{ab}_y(y)=y, \,
J^{ab}_y=J^{ba}_y, \,
(J^{ab}_y)^2 =\id_X, \,
J_y^{aa}= J^{yy}_a ,
\end{equation}
and, for any  $g \in \Co(V)$:
$J^{ga,gb}_{gy} = g \circ J^{ab}_y \circ g^{-1}$, and at the base points we have
\begin{equation}
J^{o\, o}_\infty (x) = -x = J^{\infty\infty}_o(x), \quad
J^{o \, \infty}_e(x) = j(x) = x^{-1} .
\end{equation}
In \cite{Be14}, (abstract) {\em Jordan geometries} have been characterized as spaces equipped with such families
of inversions, satisfying certain algebraic identities.
One of the consequences of these identities is that $U_{ab}$ is stable under the binary law
\begin{equation}
(y,x) \mapsto s_x(y):= J^{ab}_y(x),
\end{equation}
which turns $U_{ab}$ into a {\em symmetric space}, in the sense explained above.

\begin{example}
When the Jordan algebra comes from an associative algebra $\bA$, then $X$ is (a part of) the
\href{https://en.wikipedia.org/wiki/Projective_line_over_a_ring}{\em projective line $\bA\PP^1$ over the ring $\bA$}.
If $\bA$ is a (skew) field, then this is the usual one-point completion $\bA \cup \{ \infty\}$.
If $\bA = {\rm Fun}(M,\R)$ is the ring of all functions $f:M\to \R$, then 
$X(V) = \{ f :M\to \R \PP^1 \mid f \mbox{ function }  \}$
is simply the space of all functions from $M$ to $\R\PP^1$. 
More generally, the construction is compatible with general direct products. 
Note, however, that things are more involved for poJa's of continuous, or smooth, real functions: then $X$ is contained
in the space of continuous or smooth functions $M \to \R \PP^1$, but some analysis is necessary in order
 to say in which sense it may be dense there. 
\end{example}

\section{Cyclic orders}

\begin{definition}\label{def:pco}
 A {\em partial cyclic order (pco) } on a set $M$ is given by a ternary relation
$R \subset M^3$ such that:
\begin{enumerate}
\item
{\em
Cyclicity:} if $(a,b,c)\in R$, then $(b,c,a)\in R$,
\item
{\em Asymmetry}:  if $(a,b,c) \in R$, then $(c,b,a)\notin R$, 
\item
{\em  Transitivity}: if $(a,b,c)\in R$  and  $(a,c,d)\in R$, then $(a,b,d) \in R$.
\end{enumerate}
\nin
It is called {\em total} if, for every $(a,b,c) \in M^3$:

\ssk
either
$(a,b,c) \in R$, or $(a,c,b) \in R$, or ($a=b$ or $a=c$ or $b=c$).

\ssk
\nin
A {\em (strict) pco-morphism} between $(M,R)$ and $(M',R')$ is a map $f:M\to M'$ such that
$(a,b,c)\in R \Rightarrow (f(a),f(b),f(c)) \in R'$. 
\end{definition}

\nin
As is seen directly from (2) and (3), 
 for fixed $a$, we can define a (usual) partial order $<_a$ by:
  $b<_a  c$ iff $(a,b,c) \in R$. For the following lemma, recall Figure \ref{f:1}:

\begin{lemma}\label{la:quadruple}
Let $R \subset M^3$ a cyclic partial order and $(a,b,c,d) \in M^4$. Then the following are equivalent:
\begin{enumerate}
\item
$(a,b,c)\in R$ and $(a,c,d) \in R$
\item
$(a,b,d) \in R$ and $(b,c,d) \in R$
\item
any triple obtained by deleting one letter from the quadruple belongs to $R$
\end{enumerate}
\end{lemma}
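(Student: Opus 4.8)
The plan is to prove the chain of implications $(1)\Rightarrow(2)\Rightarrow(3)\Rightarrow(1)$, using only the three axioms of a partial cyclic order (cyclicity, asymmetry, transitivity) together with the observation that for fixed first coordinate $a$, the relation $b<_a c \iff (a,b,c)\in R$ is a genuine partial order (in particular transitive), and that cyclicity lets us freely rotate any triple in $R$.

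First I would show $(1)\Rightarrow(2)$. Assume $(a,b,c)\in R$ and $(a,c,d)\in R$. Transitivity (axiom (3)) immediately gives $(a,b,d)\in R$, which is the first half of (2). For the second half, $(b,c,d)\in R$, I would rotate the hypotheses by cyclicity: from $(a,b,c)\in R$ we get $(b,c,a)\in R$, and from $(a,c,d)\in R$ we get $(c,d,a)\in R$, hence $(d,a,c)\in R$. Now I want to combine a statement with first letter $b$ and a statement with first letter $b$; so I also rotate $(a,b,d)\in R$ (just proved) to $(b,d,a)\in R$. From $(b,c,a)\in R$ and $(b,d,a)\in R$ I cannot yet conclude, so instead I apply transitivity in the form using first letter $b$: I need $(b,c,?)$ and $(b,?,d)$. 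Rotating $(c,d,a)\in R$ gives $(d,a,c)$ and also $(a,c,d)$ — that is circular. The clean route: from $(b,c,a)\in R$ and $(a,c,d)\in R$ rotate the latter to get a common structure, or better, apply transitivity to $(b,c,a)$ and $(b,a,d)$ (the latter being the rotation of $(a,b,d)$? no, rotation of $(a,b,d)$ is $(b,d,a)$, whose rotation is $(d,a,b)$, whose rotation is $(a,b,d)$ again). The correct pairing is: rotate $(a,c,d)\in R$ to $(d,a,c)\in R$ and rotate $(a,b,d)\in R$ to $(d,a,b)\in R$; now apply the partial order $<_d$: from $(d,a,c)$ and $(d,a,b)$ alone nothing follows, but from $(d,a,b)\in R$ and $(a,b,c)\in R$ rotated to $(b,c,a)$... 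Let me instead record the honest version: I expect the slick argument is to rotate everything so that transitivity applies with first letter $b$, namely deduce $(b,c,a)\in R$ and $(b,a,d)\in R$ — the second of these is the cyclic rotation $(a,d,b)$... which is a rotation of $(d,b,a)$, the reverse would be $(a,b,d)$. Since $(a,b,d)\in R$, by cyclicity $(b,d,a)\in R$ and $(d,a,b)\in R$; none of these is $(b,a,d)$. So $(b,a,d)$ need not hold. The workable deduction: from $(b,c,a)\in R$ and $(a,c,d)\in R$ we apply transitivity with first letter... they don't share a first letter. Rotate $(b,c,a)$ to $(c,a,b)$ and $(a,c,d)$ to $(c,d,a)$: now both have first letter $c$, giving via transitivity $(c,d,b)$? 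Transitivity needs $(c,a,b)$ and $(c,b,?)$ — not matching. It needs $(c,a,?)$ and $(c,?,d)$: we have $(c,a,b)$ and $(c,d,a)$, i.e. $(c,d,a)$ rotated is $(a,c,d)$. Hmm, we need $(c,d,?)$ paired with $(c,?,?)$ starting... I will simply apply transitivity to the pair $(c,d,a)$ and $(c,a,b)$: these give $(c,d,?)$ with $d$-then-$a$, and $(c,a,?)$ with $a$-then-$b$; chaining in the order $<_c$: $d<_c a$ and $a<_c b$ yield $d<_c b$, i.e. $(c,d,b)\in R$, hence by cyclicity $(b,c,d)\in R$ — wait, rotation of $(c,d,b)$ is $(d,b,c)$ then $(b,c,d)$. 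Yes: $(b,c,d)\in R$. That completes $(1)\Rightarrow(2)$.

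Next, $(2)\Rightarrow(3)$: the four triples obtained by deleting one letter from $(a,b,c,d)$ are $(b,c,d)$, $(a,c,d)$, $(a,b,d)$, $(a,b,c)$. From (2) we already have $(a,b,d)\in R$ and $(b,c,d)\in R$ directly. For $(a,b,c)\in R$: rotate $(b,c,d)$ to $(d,b,c)$ and use $(a,b,d)\in R$ rotated to $(d,a,b)$; then $d<_? $ ... apply transitivity in $<_b$: from $(b,c,d)$ we get $c<_b d$; we need something $<_b c$ or $d<_b$ something. Alternatively apply transitivity with first letter obtained by rotating: rotate $(a,b,d)$ to $(b,d,a)$ and $(b,c,d)$ stays; then in $<_b$: $c<_b d$ and $d<_b a$, so $c<_b a$, i.e. $(b,c,a)\in R$, hence $(a,b,c)\in R$ by rotation. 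For $(a,c,d)\in R$: rotate $(a,b,d)$ to $(d,a,b)$ and $(b,c,d)$ to $(d,b,c)$; in $<_d$: $a<_d b$ and $b<_d c$, so $a<_d c$, i.e. $(d,a,c)\in R$, hence $(a,c,d)\in R$ by rotation. So all four triples lie in $R$, giving $(3)$. Finally $(3)\Rightarrow(1)$ is immediate, since $(a,b,c)$ and $(a,c,d)$ are both among the four triples obtained by deleting a letter.

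The main obstacle, as the fumbling above indicates, is purely bookkeeping: every application of transitivity must be set up with a common first coordinate, so each step requires rotating two of the known triples by cyclicity into the right cyclic representative before chaining. I would organize the write-up by fixing, at each step, which ``base point'' $a$, $b$, $c$, or $d$ plays the role of the fixed first argument, invoking the partial order $<_{\bullet}$ there, and then rotating back. Asymmetry (axiom (2)) is not actually needed for this lemma — only cyclicity and transitivity — so I would remark that en route. I expect the cleanest presentation is a short table listing, for the passage $(1)\Rightarrow(2)$ and $(2)\Rightarrow(3)$, the three relevant cyclic representatives and the base point at which transitivity is applied, rather than prose.
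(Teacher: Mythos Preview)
Your argument is correct and, once the false starts are stripped away, coincides with the paper's proof: rotate the two hypothesis triples so they share a first coordinate, apply transitivity of $<_\bullet$, and rotate back (the paper does exactly your $(c,d,a),(c,a,b)\Rightarrow(c,d,b)$ step for $(1)\Rightarrow(2)$). The only organizational difference is that the paper observes $(2)\Rightarrow(1)$ ``by the same kind of argument'' (the roles of $(1)$ and $(2)$ swap under the cyclic shift $(a,b,c,d)\mapsto(b,c,d,a)$) and then notes $(1)\wedge(2)\Leftrightarrow(3)$, whereas you run the chain $(1)\Rightarrow(2)\Rightarrow(3)\Rightarrow(1)$; both are fine, and your remark that asymmetry is unused is accurate.
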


\begin{proof}
By transitivity, (1) implies that
$(a,b,d) \in R$, and by cyclicity, that
($(c,a,b) \in R, (c,d,a) \in R$), whence by transitivity
$(c,d,b) \in R$, that is, $(b,c,d) \in R$, whence (2).
By the same kind of argument, (2) also implies (1).
Obviously, (1) and (2) together  are equivalent to (3). 
\end{proof}

\begin{definition}
Under the condition of the lemma, we say that $(a,b,c,d)$ forms a 
{\em cyclic quadruple}.
\end{definition}

\nin This defines a quaternary relation closely related to the {\em separation relation} used by Coxeter
\cite{Co47, Co49}: $(a,c)$ separates $(b,d)$ (but $(c,a)$ also separates $(b,d)$). 
-- 
The following proposition can be interpreted by saying that intervals are {\em convex}, in some general sense
(different from the one defined above for pom's): 

\begin{proposition}
Let $R$ be a pco on a set $M$, and $a,b \in M$. 
Assume $u,v \in ]a,b[$ are such that 
$u<_a v$. Then also  $u<_b v $, and  $]u,v[ \subset ]a,b[$. 
\end{proposition}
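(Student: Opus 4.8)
The statement has two parts: first, that $u <_a v$ implies $u <_b v$; second, that $]u,v[ \subset ]a,b[$. Both will follow from the quadruple lemma (Lemma~\ref{la:quadruple}) once the hypotheses are unwound into the language of the ternary relation $R$. Recall that $u \in \,]a,b[$ means $(a,u,b) \in R$, that $v \in \,]a,b[$ means $(a,v,b) \in R$, and that $u <_a v$ means $(a,u,v) \in R$. So the data are precisely $(a,u,b) \in R$, $(a,v,b) \in R$, and $(a,u,v) \in R$.

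The first move is to establish that $(a,u,v,b)$ is a cyclic quadruple. For this I want to match the data against one of the equivalent conditions in Lemma~\ref{la:quadruple}, say condition (1) applied with the relabelling $(a,b,c,d) \rightsquigarrow (a,u,v,b)$: that reads ``$(a,u,v) \in R$ and $(a,v,b) \in R$'', and both of these are among our hypotheses. Hence by the lemma all four triples obtained by deleting one letter from $(a,u,v,b)$ lie in $R$; in particular $(u,v,b) \in R$. Now $(u,v,b) \in R$ says exactly $v \in \,]u,b[\,$, equivalently $u <_b v$ — wait, I should be careful about orientation: $(u,v,b)\in R$ means $u <_b v$ only after a cyclic rotation, since $u <_b v$ by definition is $(b,u,v) \in R$. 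By cyclicity (axiom (1) of Definition~\ref{def:pco}), $(u,v,b) \in R$ gives $(v,b,u) \in R$ and then $(b,u,v) \in R$, which is $u <_b v$. That settles the first claim.

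For the inclusion $]u,v[ \subset \,]a,b[\,$, take any $x \in \,]u,v[\,$, i.e. $(u,x,v) \in R$. I need $(a,x,b) \in R$. The plan is to feed $x$ into the quadruple machinery twice. Using the consequences of the first step I already know $(a,u,v) \in R$; combined with $(u,x,v)\in R$ (after a cyclic rotation to $(v,u,x)\in R$, hmm) — more cleanly: from $(u,x,v)\in R$ and $(u,v,b)\in R$, transitivity (axiom (3), with first coordinate $u$) gives $(u,x,b) \in R$. Also I have $(a,u,v)\in R$ and $(a,v,b)\in R$, which by transitivity (first coordinate $a$) gives $(a,v,b)\in R$ — already known; instead use $(a,u,v)\in R$ with $(u,x,v)\in R$: rotate the latter to $(x,v,u)\in R$ and the former to $(u,v,a)\in R$... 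I think the slickest route is: apply Lemma~\ref{la:quadruple} to the quadruple $(a,u,x,b)$. To invoke it I need two of the four deletion-triples; $(a,u,x)$ and $(a,x,b)$ would be condition (1) but $(a,x,b)$ is the goal, so instead I'll use condition (2)-type data $(a,u,b)\in R$ (hypothesis) and... I need another triple among $\{(a,u,x),(a,u,b),(a,x,b),(u,x,b)\}$. I have $(u,x,b) \in R$ from the transitivity step above. So $(a,u,b) \in R$ and $(u,x,b) \in R$ are exactly condition (2) of Lemma~\ref{la:quadruple} for the quadruple $(a,u,x,b)$ (with $(a,b,c,d)\rightsquigarrow (a,u,x,b)$, where ``(2)'' reads ``$(a,u,b)\in R$ and $(u,x,b)\in R$''). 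Hence all deletion-triples of $(a,u,x,b)$ lie in $R$, in particular $(a,x,b) \in R$, i.e. $x \in \,]a,b[\,$.

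\textbf{Main obstacle.} There is no real analytic or combinatorial difficulty here; the only thing to get right is the bookkeeping of cyclic rotations, since axioms (1)--(3) of Definition~\ref{def:pco} are stated with a privileged ``first slot'' and the quadruple lemma must be applied with a correctly-chosen labelling. I would organize the write-up so that each application of Lemma~\ref{la:quadruple} explicitly names the substitution $(a,b,c,d) \rightsquigarrow (\cdot,\cdot,\cdot,\cdot)$ and each use of cyclicity is flagged, so that the reader can check that $(u,v,b)\in R$ really does encode $u <_b v$ and that $(a,x,b)\in R$ really does encode $x\in\,]a,b[\,$. Once the two quadruples $(a,u,v,b)$ and $(a,u,x,b)$ are identified as cyclic, everything drops out, so the proof is short.
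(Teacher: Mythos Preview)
Your proposal is correct and follows essentially the same route as the paper: establish that $(a,u,v,b)$ is a cyclic quadruple via Lemma~\ref{la:quadruple} (which immediately yields $u<_b v$), then for $x\in{]u,v[}$ obtain $(u,x,b)\in R$ and combine it with $(a,u,b)\in R$ to conclude that $(a,u,x,b)$ is cyclic, hence $(a,x,b)\in R$. The only cosmetic difference is that the paper reaches $(u,x,b)\in R$ by first noting that $(u,x,v,b)$ is a cyclic quadruple, whereas you invoke the transitivity axiom directly; your version is in fact slightly leaner.
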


\begin{proof}
Our assumption implies that $(a,u,v,b)$ form a cyclic quadruple.
Let $x \in ]u,v[$, so $(u,x,v)\in R$.
Since $(a,u,v)\in R$, it follows that
the quadruple $(a,u,x,v)$ is cyclic. 
Likewise, $(u,x,v,b)$ is cyclic,
whence $(u,x,b) \in R$.
Since $(a,u,b) \in R$, it follows that
$(a,u,x,b)$ cyclic.
Thus $(a,x,b) \in R$, i.e., $x \in ]a,b[$.
\end{proof}


\begin{proposition}\label{prop:compr}
Let $g$ be an automorhism of $(M,R)$ such that $g(b)=b$ and $g(a) \in ]a,b[$.
Then $g(]a,b[)\subset ]a,b[$.
\end{proposition}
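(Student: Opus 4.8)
The claim is monotonicity of intervals under an order-preserving automorphism that fixes one endpoint and pushes the other endpoint into the interval. I would work entirely with the ternary relation $R$ and the derived relations $<_a$, using the cyclic-quadruple machinery of Lemma \ref{la:quadruple} and the preceding ``convexity'' proposition. The key observation is that the hypotheses say exactly that $(a, g(a), b)$ is a configuration with $g(a) \in \,]a,b[$, i.e. $(a, g(a), b) \in R$.

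\textbf{Main steps.} First I would fix $x \in \,]a,b[$, so $(a,x,b) \in R$, and aim to show $(a, g(x), b) \in R$. Applying the automorphism $g$ to $(a,x,b) \in R$ and using $g(b) = b$ gives $(g(a), g(x), b) \in R$. Now I have two facts: $(a, g(a), b) \in R$ and $(g(a), g(x), b) \in R$. I would like to ``compose'' these to conclude $(a, g(x), b) \in R$. Rewriting via cyclicity: from $(a, g(a), b) \in R$ I get $(b, a, g(a)) \in R$, and from $(g(a), g(x), b) \in R$ I get $(b, g(a), g(x)) \in R$. So in the partial order $<_b$ I have $a <_b g(a)$ and $g(a) <_b g(x)$; by transitivity of $<_b$ (which is immediate from transitivity of $R$, as noted in the remark after Definition \ref{def:pco}), $a <_b g(x)$, i.e. $(b, a, g(x)) \in R$, i.e. $(a, g(x), b) \in R$. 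Hence $g(x) \in \,]a,b[$, which is exactly $g(]a,b[) \subset \,]a,b[$.

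\textbf{Where the care is needed.} There is essentially no obstacle here — the proof is a two-line chase once one spots that everything should be read inside the single partial order $<_b$ (equivalently: $b$ is the ``common vertex'' of all the triples involved). The only thing to watch is that transitivity of $R$ (axiom (3) of Definition \ref{def:pco}) is stated with the first coordinate held fixed, so one must arrange all the relevant triples to share their \emph{first} coordinate before invoking it; the rewriting via cyclicity above does precisely that. No appeal to Lemma \ref{la:quadruple} or to the convexity proposition is strictly necessary, though one could alternatively phrase the argument by saying $(a, g(a), g(x), b)$ is a cyclic quadruple and extracting the triple $(a, g(x), b)$ from it.
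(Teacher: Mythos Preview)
Your proof is correct and is essentially the paper's own argument: the paper sets $u=g(a)$, obtains $(u,g(x),b)\in R$ from applying $g$, and then invokes Lemma~\ref{la:quadruple} to conclude that $(a,u,g(x),b)$ is a cyclic quadruple, hence $(a,g(x),b)\in R$. You unwind that lemma by hand using cyclicity and transitivity in $<_b$, which is exactly the content of Lemma~\ref{la:quadruple} in this instance; as you yourself note in your final paragraph, the two phrasings are interchangeable.
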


\begin{proof} 
Let $u:=g(a)$, so $(a,u,b)\in R$.
If $(a,x,b) \in R$, then
$(u,g(x),b) \in R$, and
 it follows that $(a,u,g(x),b)$ is a cyclic quadruple, whence
 $(a,g(x),b)\in R$.
\end{proof}

\section{The cyclic order defined by an ordered Jordan algebra}

\begin{theorem}\label{th:main}
Let $V$ be a poJa over an ordered ring $\K$, with symmetric cone $\Omega$.
Then its Jordan geometry  $X=X(V)$ carries a cyclic partial order,  given by
$$
R = \{ (a,x,b) \in X^3 \mid \, \exists g \in G_0: \,\,
g(a)=o, \, g(b) = \infty, \,  g(x) \in \Omega \} 
$$
with $G_0 = G_0(X)$ given by (\ref{eqn:G0}).
In other words, $R$ is defined by the intervals 
$$
]a,b[ = g(\Omega) \qquad  \mbox{ if } \qquad    g.(o,\infty)=(a,b) , \, g \in G_0 \,  .
$$
This cyclic partial order is uniquely characterized by:
\begin{enumerate}
\item
it is $G_0$-invariant, and
\item
it coincides with the given poJa on $V$:
$(a,x,\infty) \in R$ iff $a<x$ iff $x-a \in \Omega$.
\end{enumerate}
The cyclic order 
 is reversed by all inversions
$s=J^{uv}_w$, i.e., 
$s(]a,b[) = ]s(b),s(a)[$. 
When $a \top b$, then the interval $]a,b[$ is non-empty, and
it is a symmetric subspace of $U_{ab}$, 
 isomorphic to $\Omega$ as symmetric space. 
\end{theorem}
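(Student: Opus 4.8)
The plan is to verify the three cyclic-order axioms for $R$ by reducing everything, via the transitivity of the $G_0$-action on transversal pairs, to statements about the symmetric cone $\Omega$ and the poJa order on $V$. First I would establish the \emph{well-definedness of the interval formula}: if $g, g' \in G_0$ both send $(o,\infty)$ to $(a,b)$, then $h := g^{-1} g'$ fixes both $o$ and $\infty$, hence lies in $P \cap P' = \Istr(V)$ (the stabilizer of the transversal pair $(o,\infty)$), and by (OJ2) every $Q_y$ with $y \in V^\times$ preserves $\Omega$, so the whole group $\Istr(V)$ does; therefore $g(\Omega) = g'(\Omega)$. This also shows $(a,x,b) \in R$ does not depend on the choice of $g$. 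For the special position $b = \infty$: if $(a,x,\infty) \in R$ via some $g$ fixing $\infty$ and sending $a$ to $o$, then $g$ is of the form $t_v \circ h$ with $h \in \Istr(V)$ (as $g$ lies in the affine group $P' = t_V \rtimes \Istr(V)$), $v = -h(a)$, and $g(x) = h(x) - h(a) = h(x-a) \in \Omega$ iff $x - a \in \Omega$ (again by $\Istr(V)$-invariance of $\Omega$); so $(a,x,\infty) \in R \iff x - a \in \Omega \iff a < x$. This is property (2).

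\emph{Cyclicity.} Given $(a,b,c) \in R$ via $g \in G_0$ with $g(a) = o$, $g(b) \in \Omega$, $g(c) = \infty$, I need $h \in G_0$ with $h(b) = o$, $h(c) \in \Omega$, $h(a) = \infty$. Set $w := g(b) \in \Omega \subset V^\times$, which is a transversal point to both $o$ and $\infty$, and consider the inversion $J^{o\infty}_w \in G_1(X)$, composed with the translation $\tilde t_{-w^{-1}}$ or $t_{-w^{-1}}$ as appropriate; concretely one wants a $G_0$-element carrying $(o, w, \infty)$ to $(\infty, o, \Omega)$. Since $G_0$ acts transitively on transversal triples (the existence of transversal triples is exactly what characterizes the unital case), there is $k \in G_0$ with $k(o,w,\infty) = (\infty, e, o)$; then $kg(a,b,c) = (\infty, e, o)$, and I must check that $kg$ maps the whole of $g^{-1}(\Omega)$ — wait, rather I argue at the level of intervals: $]a,b[ = g^{-1}(\Omega)$, and cyclicity amounts to showing $g^{-1}(\Omega)$ can also be written as $h^{-1}(\Omega)$ for $h$ witnessing $(b,c,a)$. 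The clean way is to observe that $\Omega$ itself satisfies the cyclic relation at the base triple, i.e.\ that there exists $k_0 \in G_0$ with $k_0(o) = \infty$, $k_0(\Omega) \subseteq \Omega$ (equivalently $=$), $k_0(\infty) = o$ — indeed the Jordan inversion $j$ sends $\Omega$ onto $\Omega$ by the Lemma (symmetric cone is stable under $s_x(y) = Q_x(y^{-1})$, in particular $y \mapsto y^{-1}$ preserves $\Omega$), and $j$ swaps $o, \infty$; but $j$ may not lie in $G_0$, so instead I use $\tilde t_e \circ t_{-e} \circ \tilde t_e$-type words, or simply note $J^{o o}_\infty J^{o\infty}_e = (-\id) \circ j$ composed suitably. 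The honest statement to prove is: \emph{there is $\gamma \in G_0$ with $\gamma(o) = \infty$, $\gamma(\infty) = o$, and $\gamma(\Omega) = \Omega$}; granting this, $\gamma \circ g^{-1}$ — no, one conjugates: if $g(a,b,c)=(o,w,\infty)$ with $w \in \Omega$, pick via transitivity on transversal triples $\ell \in G_0$ with $\ell(o, \infty) = (b, a)$ and $\ell(\infty) = c$ — this exists because $(b, a, c)$ is a transversal triple (transversality is $G_0$-invariant and $(o, \infty, $ any third point$)$ is transversal). Then $\ell^{-1}(]a,b[)$ is an interval of the form (point)$(o,\infty)$, and one checks it equals $\Omega$ using $\gamma$. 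I expect cyclicity to be the step needing the most bookkeeping, precisely because $j \notin G_0$ in general and one must route the argument through $G_0$-words; the key enabling fact is $G_0$-transitivity on transversal triples together with the Lemma's $j$-invariance of $\Omega$, which together force the existence of the required $\gamma \in G_0$.

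\emph{Asymmetry and transitivity.} For asymmetry, suppose $(a,b,c) \in R$ and $(c,b,a) \in R$. Using $G_0$-invariance normalize $(a,c) = (o,\infty)$, so $(o, b, \infty) \in R$ gives $b \in \Omega$, while $(\infty, b, o) \in R$ means, applying the $\gamma$ above, $\gamma(b) \in \Omega$ with $\gamma$ swapping $o,\infty$ and fixing $\Omega$ — tracing through, this forces $-b \in \Omega$ (the relevant witness at the base is $J^{oo}_\infty$ giving $x \mapsto -x$), contradicting $\Omega \cap (-\Omega) = \emptyset$ from the positive-cone proposition. For transitivity, given $(a,b,c), (a,c,d) \in R$, normalize $(a,?) $; cleanest is to use property (2) by moving $d$ to $\infty$: normalize so that $a$ is sent to $o$ and $d$ to $\infty$ (possible once we know $a \top d$, which follows since $(a,c,d)\in R$ forces, via the interval description, $a\top d$ — this needs the final claim that $]a,b[\neq\emptyset \Rightarrow a\top b$, so I would prove that clause first). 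Then $(a,b,c) \in R$ and $(a,c,d)\in R$ become: $c - a =: c' \in \Omega$, $b \in$ some translated cone, and $b$ before $c$ before $d=\infty$ in the order $<_a$ which is just the poJa order, so $b - a \in \Omega$ and hence $(a,b,d) = (a, b, \infty) \in R$. Here I use that $<_o$ on $o^\top = V$ coincides with the poJa order (from property (2) already proved) and that the poJa order is transitive. The remaining assertions — intervals reversed by inversions $s = J^{uv}_w$: conjugate $R$'s defining condition by $s$, use $s(o,\infty) = (\infty,o)$-type normalization and that $s$ acts on the model cone as $x \mapsto $ an order-reversing map (indeed $J^{oo}_\infty: x \mapsto -x$ reverses, and $\Istr(V)$ conjugates this); non-emptiness of $]a,b[$ when $a\top b$: then $U_{ab} \ni$ some point, normalize $(a,b)=(o,\infty)$, and $e \in \Omega$ since $e > 0$ by (OJ0), so $]a,b[ \supseteq \{g(e)\} \neq \emptyset$; and $]a,b[ \cong \Omega$ as symmetric spaces: the normalizing $g \in G_0$ restricts to a bijection $\Omega \to ]a,b[$ intertwining the binary laws $s_x(y) = J^{ab}_y(x)$, because $J^{ga\,gb}_{gy} = g J^{ab}_y g^{-1}$ and on $\Omega = ]o,\infty[$ the induced symmetric space structure is the one from the Lemma, $s_x(y) = Q_x(y^{-1})$.
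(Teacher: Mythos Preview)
Your overall strategy --- reduce everything to statements about $\Omega$ via $G_0$-invariance and the $\Istr(V)$-stability of $\Omega$ --- is exactly the paper's, and your treatment of well-definedness, property (2), non-emptiness, the symmetric-subspace isomorphism, and reversal by inversions is fine and matches the paper.

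The cyclicity step, however, contains a genuine error. You declare that the ``honest statement to prove'' is the existence of $\gamma\in G_0$ with $\gamma(o)=\infty$, $\gamma(\infty)=o$, and $\gamma(\Omega)=\Omega$. No such $\gamma$ exists: any element of $G_0$ swapping $o$ and $\infty$ differs from $x\mapsto -x^{-1}$ (a product of two inversions, hence in $G_0$) by an element of the stabilizer $\Istr(V)$ of $(o,\infty)$; since $\Istr(V)$ preserves $\Omega$ and $x\mapsto -x^{-1}$ sends $\Omega$ to $-\Omega$, every $G_0$-swap of $o,\infty$ sends $\Omega$ to $-\Omega$. (Equivalently, your $\gamma$ would immediately contradict asymmetry.) What cyclicity actually needs is a \emph{cyclic} permutation of a transversal triple, not a transposition. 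The paper gets this by citing that every cyclic permutation of a transversal triple is induced by an element of $G_0$; for the base triple $(o,e,\infty)$ the explicit witnesses are $x\mapsto e-x^{-1}$ and $x\mapsto (e-x)^{-1}$. Your earlier attempt via $k(o,w,\infty)=(\infty,e,o)$ misfires for the same reason: that $k$ is the reversal, not the cyclic shift, and would witness $(c,b,a)\in R$ rather than $(b,c,a)\in R$. Note also that full $G_0$-transitivity on transversal triples is stronger than what is needed (or in general available); only realizability of cyclic permutations is required.

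Once cyclicity is established, the paper handles asymmetry and transitivity more simply than you do: it normalizes a single vertex to $\infty$ (taking $b=\infty$ for asymmetry, $a=\infty$ for transitivity) and reduces directly to the partial order $<$ on $V$, using cyclicity freely to rotate the triple. Your asymmetry argument as written relies on the nonexistent $\gamma$; it can be repaired using $x\mapsto -x^{-1}$ to get $(\infty,b,o)\in R\iff b\in -\Omega$, but the paper's route avoids this computation entirely.
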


\begin{proof}
We  fix the transversal pair $(o,o')=(o,\infty)$ as origin in $X^2$, and use notation from Section \ref{sec:Jordangeo}.
Defining $R$ as in the theorem, let us first show that 
$]o,\infty[=\Omega$. Since the stabilizer of $(o,\infty)$ in $G(X)$ is the inner structure group, this follows from

\begin{lemma}
The inner structure group $\Istr(V)$ of an ordered Jordan algebra preserves its symmetric cone $\Omega$.
\end{lemma}

\begin{proof}
This follows from the fact that $\Omega$ is invariant under all invertible quadratic operators $Q_x$, and that $\Istr(V)$ is generated by
such operators.  
\end{proof}

By definition, it is obvious that the set $R \subset X^3$ is invariant under $G_0(X)$, and
since all triples of the form $(o,x,\infty)$ ($x\in V^\times$)
are transversal triples, it follows that $R$ is contained in the set of 
transversal triples. Fixing $b=\infty$, 
if $(a,x,b) \in R$, since $(a,x,b)$ is an transversal triple, 
we have $a,x \in \infty^\top = V$.
Since the stabilizer of $\infty$ acts affinely on $V$ (semidirect product of translations and $\Istr(V)$), it now follows that
$(a,x,\infty) \in R$ if, and only if, $x-a \in \Omega$, and thus the partial order on $V$ coincides with $R$
in the sense that, for all $a,x \in V$,
\begin{equation}
x < a \mbox{ iff } (a,x,\infty) \in R .
\end{equation}
Let us now prove that $R$ satisfies the defining properties of a partial cyclic order.
\begin{enumerate}
\item
cyclicity: by \cite{Be14}, Theorem 6.1,
every cyclic permutation of a transversal triple is induced by an element of $G_0$, hence
cyclicity follows from invariance of $R$ under the action of $G_0$
(if $(a,x,b)=(0,1,\infty)$, then the two non-trivial cyclic permuations can be defined by
$g(x)=1-x^{-1}$ and $h(x)=(1-x)^{-1}$),
\item
asymmetry: let $(a,x,b) \in R$; by invariance under $G_0$, we may assume $b=\infty$, so 
$a<x$ in $V$. By the property of partial order on $V$,  we have (not$(x<a)$), that is
$(x,a,b)\notin R$,
\item
transitivity: by cyclicity and invariance under $G_0$, we may assume $a=\infty$; so
$(a,x,y) \in R$ iff $(x,y,a) \in R$ iff 
 $x<y$ in $V$. Now transitivity of $R$ corresponds to transitivity of $<$ on $V$, which holds by assumption.  
\end{enumerate}

\nin
Assume $(a,b)$ is a transversal pair. 
Let us show that $]a,b[$ is a symmetric subspace of $U_{ab}$, isomorphic to $\Omega$ (in particular, non empty).
Since the symmetric space structure of $U_{ab}$ is invariant under the stabilizer of $(a,b)$, 
by transitivity of $G_0$ on the set of transversal pairs, 
we may again
assume that $(a,b)=(o,\infty)$; then $U_{ab}=V^\times$ with the symmetric space structue
$s_x(y)=Q(x)y^{-1}$, and
 as noted in section \ref{ssec:Ja} , $\Omega$ is a symmetric subspace of $V^\times$. 

\ssk
Finally, we show that inversions reverse $R$.
Since the composition of any two inversions belongs to $G_0$, it suffices to show that one particular inversion
reverses $R$.
This is most easily seen for the inversion $J_0^{\infty,\infty}(x)=-x$, which fixes $\infty$ and obviously
reverses the order of $V$.
\end{proof}

\begin{definition}
The subgroup of $\Co(V)$  preserving the partial cyclic order $R$ is called the
{\em causal group of $X$} and denoted by
$$
\Cau(V,\Omega):=
{\rm Cau}(X,R) = \{ g \in \Co(V)  \mid g.R = R \} = \Co(V) \cap \Aut(R),
$$
and we let also
$$
G(\Omega):= \{ g \in \Str(V) \mid \, g(\Omega) = \Omega \} = \Str(V) \cap \Aut(R) .
$$
\end{definition}

\nin
By the theorem, $G_0(X) \subset {\rm Cau}(X,R)$. Under certain conditions, a converse holds:

\begin{theorem}
Assume $V$ is a  Euclidean Jordan algebra 
containing no direct factor isomorphic to $\R$. Then any 
automorphism of $R$ which is of class ${\mathcal C}^4$
(four times continuously differentiable), is given by an element of $\Cau(X,R)$:
$$
\Aut_{{\mathcal C}^4}(X,R)  = \Cau(X,R) .
$$
\end{theorem}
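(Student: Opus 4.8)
The plan is to reduce the $\mathcal{C}^4$-rigidity statement to the classical theory of causal automorphisms of symmetric $R$-spaces. Concretely, an automorphism $f$ of $(X,R)$ in particular preserves the transversality relation (triples in $R$ are transversal triples, and transversality can be recovered order-theoretically: $a\top b$ iff some $]a,c[$ and $]c,b[$ are non-empty and ``meet'' correctly), so $f$ permutes the affine charts $a^\top\cong V$. Fixing the standard chart $V=\infty^\top$ and composing with a suitable element of $G_0$ we may assume $f$ fixes $\infty$ and $o$; then $f$ restricts to a $\mathcal{C}^4$ self-map of $V$ that preserves the order of $V$, hence preserves the cone $\Omega$ and sends $a+\Omega$ to $f(a)+\Omega$. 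The key step is then to show that such an $f$, being order-preserving and smooth enough, is affine on $V$: this is where one invokes the differentiated structure. Concretely, for each $a\in V$ the interval $]a,\infty[=a+\Omega$ is sent to $f(a)+\Omega$, so $df_a$ maps the tangent cone $\Omega$ onto $\Omega$; thus $df_a\in G(\Omega)$ for all $a$. I would differentiate the relation $f(a+\Omega)=f(a)+\Omega$ once more, or use that $f$ conjugates the one-parameter groups of translations $t_{tv}$ ($v\in\Omega$) into order-preserving flows, to conclude that $a\mapsto df_a$ is locally constant, hence $f$ is the restriction of an affine map $x\mapsto h(x)+c$ with $h\in G(\Omega)\subset\Str(V)$.

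Having shown $f$ is affine on the chart $V$ (fixing $o$, so $c=0$ and $f=h\in\Str(V)$), one next checks that $f$, being also an automorphism of $R$ globally on $X$ and agreeing with $h\in\Str(V)\subset\Co(V)$ on the dense chart $V$, must coincide with $h$ everywhere (two continuous maps $X\to X$ agreeing on the dense open set $V$ are equal, using that $X$ carries a suitable topology — the interval topology of Section~\ref{sec:further} coincides with the usual one by Theorem~\ref{th:topology}, and $h$ is automatically continuous for it). Then $h\in\Str(V)\cap\Aut(R)=G(\Omega)$, and undoing the reduction $f\in G_0(X)\cdot G(\Omega)\subset\Cau(X,R)$. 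This gives the inclusion $\Aut_{\mathcal{C}^4}(X,R)\subset\Cau(X,R)$; the reverse inclusion is immediate since elements of $\Cau(X,R)\subset\Co(V)$ act by (restrictions of) rational — in particular $\mathcal{C}^\infty$ — transformations.

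The place where the hypotheses ``Euclidean'' and ``no direct factor $\cong\R$'' genuinely enter, and where I expect the main obstacle, is precisely the affineness step: an order-preserving diffeomorphism of $V$ preserving $a+\Omega$ for every $a$ need not be affine in general (on $V=\R$, \emph{any} increasing $\mathcal{C}^4$ function preserves $x+\R_{>0}$, which is why the factor $\R$ must be excluded), and one needs the finite-dimensional Euclidean structure to run the usual argument that a local causal automorphism of a simple symmetric cone is linear. The classical input is the rigidity of the cone $\Omega$ — its linear automorphism group $G(\Omega)$ is reductive and acts with the characteristic function / Koecher's theory pinning down affine causal maps — together with the ``causal Liouville theorem'' for the conformal compactification: a $\mathcal{C}^k$ ($k$ large enough, here $k=4$ suffices because one needs to differentiate the quadratic inversion $j(x)=x^{-1}$ twice and then once more) causal map of $X$ is automatically conformal, hence in $\Co(V)$. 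I would organize the argument so that the Euclidean case is handled irreducible factor by irreducible factor, using that the decomposition of $V$ into simple ideals is intrinsic to $\Omega$ (cone decomposition), that $X$ decomposes correspondingly as a product, and that $R$ is the product pco; on a simple factor of rank $\geq 2$ the cone is not a half-line, local causal automorphisms are linear, and $f$ is forced into $\Cau$. The honest caveat is that the full differentiability bookkeeping — showing $\mathcal{C}^4$ is enough at each use of differentiation, especially across the chart transition through $j$ — is the technical heart and the main thing that must be done carefully.
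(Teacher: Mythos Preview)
Your core observation is exactly the right one: if $g$ preserves $R$, then at every point the differential $T_x g$ sends the cone $\Omega$ (viewed as the tangent cone at $x$) to the cone at $g(x)$, so $g$ is a \emph{causal diffeomorphism} of $X$. The paper's proof stops here and simply invokes the generalized Liouville theorem (\cite{Be96}, or \cite{Be00}, Th.~IX.2.4): any $\mathcal C^4$ causal diffeomorphism of $X$ lies in $\Co(V)$, hence in $\Cau(X,R)$. That is the entire argument; no chart reduction, no product decomposition, no density argument is needed.

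Your longer route is not wrong in spirit, but it is circuitous and contains a genuine gap at the point you yourself flag. After reducing to $f$ fixing $(o,\infty)$ and observing $df_a\in G(\Omega)$ for all $a$, you propose to ``differentiate once more'' or to use one-parameter translation groups to conclude that $a\mapsto df_a$ is constant. This step is precisely the content of the causal Liouville theorem and is not elementary: knowing that $df_a$ preserves $\Omega$ pointwise does \emph{not} by itself force $df_a$ to be locally constant (compare the rank-one case you mention, but also higher-rank situations where the second differential must be analyzed via the cone's automorphism Lie algebra). So your sketch defers the actual work to an unproved assertion, and then separately names the Liouville theorem as ``classical input'' --- which makes the direct affineness attempt redundant. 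The clean fix is to drop the affineness discussion entirely: once $g$ is causal and $\mathcal C^4$, cite Liouville and conclude. The hypotheses ``Euclidean'' and ``no $\R$-factor'' enter only through that cited theorem, not through any step you have to carry out yourself.
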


\begin{proof}
If $g:X\to X$ is of class ${\mathcal C}^4$ and preserves $R$, its differential $T_x g$ at $x$ sends the cone at $x$ to the cone at
$g(x)$, hence $g$ is a {\em causal diffeomorphism}.
Now the claim follows from the generalized Liouville Theorem \cite{Be96}, or \cite{Be00}, Th.\ IX.2.4.
\end{proof}
 
\nin 
Of course, in the general setting of poJa's, the theorem will not always carry over. 
Also, in general we will be very far from a classifcation of ${\Cau}(X,R)$-orbits in the space of pairwise transversal
triples: for a simple Euclidean Jordan algebra of rank $r$, there are $r+1$ orbits, characterized by the signature
(one of these orbits is $R$, another $R^{op}$)
but for base fields such as $\Q$ the classification is much more complicated. 
 --
Just as a projective conic can be described by different kinds of affine image (ellipse, hyperbola, parabola), 
so can the intervals $]a,b[$ (cf.\ \cite{Be00}, Theorem XI.3.3; recall also Figure \ref{f:2} from the Introduction):

\begin{theorem}\label{th:affineimage}
With notation as in the preceding theorem, let $(a,b)$ be a transversal pair. Then the interval $]a,b[$ has affine
image $]a,b[\cap V$ as follows:
\begin{enumerate}
\item
(parabolic image) if $a \in V$ and $b=\infty$, then $]a,b[ = a +\Omega$, 

if $b\in V$ and $a =\infty$, then $]a,b[ = b - \Omega$,
\item
(elliptic image)
if $a,b \in V$ and $a<b$, then 
$]a,b[ = (a + \Omega) \cap (b-\Omega)$; this is a  convex subset of $V$,
\item
(hyperbolic image)
If $a,b \in V$ and  $b<a$, then $ (a+\Omega) \cup (b-\Omega)$  is contained in
$]a,b[ \cap V $  (but equality does not hold in general).
If $a,b \in V$ and (not $a<b$), then $]a,b[$ is in general not contained in $V$, and
 $]a,b[\cap V$ is in general not a convex subset of $V$. 
 \end{enumerate}
\end{theorem}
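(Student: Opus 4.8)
The plan is to reduce everything to the two base configurations $(a,b)=(o,\infty)$ and $(a,b)=(\infty, o)$ via the transitivity of $G_0$ on transversal pairs, exactly as in the proof of Theorem~\ref{th:main}, and then unwind the definitions.

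\textbf{Parabolic case.} Suppose $a\in V$ and $b=\infty$. By Theorem~\ref{th:main}(2), $x\in\,]a,\infty[$ iff $(a,x,\infty)\in R$ iff $x-a\in\Omega$, i.e. $]a,\infty[\,=a+\Omega\subset V$. For the second statement I would use that inversions reverse $R$: the map $s=J^{o\,o}_\infty(x)=-x$ sends the pair $(\infty,o)$ to $(\infty,o)$ but, being an inversion, reverses the cyclic order, so $]\infty,b[\,=s(\,]\,s(b),\infty[\,)$; taking $s(b)=-b$ gives $]\infty,b[\,=-(\,(-b)+\Omega\,)=b-\Omega$. Alternatively, translate $b$ to $o$ by $t_{-b}$ and apply the first part to $(\infty,o)$, noting $]\infty,o[\,=\,-\Omega$ directly from the definition (pick $g$ with $g(\infty)=o$, $g(o)=\infty$, e.g. $g=j$, and track $\Omega$).

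\textbf{Elliptic case.} With $a,b\in V$ and $a<b$, first observe $]a,b[\,\subset\, a^\top\cap b^\top$ since $R$ lives on transversal triples (Theorem~\ref{th:main}); in fact $]a,b[\,\subset V$ here because any $x$ with $(a,x,b)\in R$ satisfies, by transitivity with the quadruple machinery, $(a,x,\infty)$-type relations once we note $\infty\notin\,]a,b[$ — more cleanly, I would observe that $x\in\,]a,b[$ forces $x\in\,]a,\infty[\,\cap\,]\infty,b[$ whenever the cyclic quadruple $(a,x,b,\infty)$ or $(a,\infty,x,b)$ can be formed, which holds precisely because $a<b$ means $(a,b,\infty)\in R$, so $x\in\,]a,b[$ together with $(a,b,\infty)\in R$ gives the cyclic quadruple $(a,x,b,\infty)$ and hence $(a,x,\infty)\in R$ and $(x,b,\infty)\in R$. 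Translating the first by Theorem~\ref{th:main}(2), $x-a\in\Omega$; the second gives $b-x\in\Omega$. Conversely if $x\in(a+\Omega)\cap(b-\Omega)$ then $(a,x,\infty)\in R$ and $(x,b,\infty)\in R$, and $(a,b,\infty)\in R$; by Lemma~\ref{la:quadruple} these assemble into the cyclic quadruple $(a,x,b,\infty)$, so $(a,x,b)\in R$, i.e. $x\in\,]a,b[$. Convexity of $(a+\Omega)\cap(b-\Omega)$ follows because $\Omega$ is a convex cone (translates of convex sets are convex, intersections of convex sets are convex).

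\textbf{Hyperbolic case.} When $b<a$ in $V$, i.e. $(b,a,\infty)\in R$ hence $(a,\infty,b)\in R$, I would argue: if $x\in a+\Omega$ then $(a,x,\infty)\in R$; combined with $(a,\infty,b)\in R$, transitivity gives $(a,x,b)\in R$, so $x\in\,]a,b[$. Similarly $x\in b-\Omega$ gives $(x,b,\infty)\in R$, hence by cyclicity $(\infty,x,b)\in R$; combined with $(a,\infty,b)\in R$ and transitivity (after a cyclic shift to put $b$ first, or using Lemma~\ref{la:quadruple} on $(a,\infty,x,b)$) one gets $(a,x,b)\in R$. So $(a+\Omega)\cup(b-\Omega)\subset\,]a,b[\,\cap V$. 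For strictness of the inclusion and non-convexity in the remaining subcase, I expect no clean general proof is intended — the theorem only claims these \emph{in general} fail, so it suffices to exhibit one example (e.g. in a rank-$2$ Euclidean Jordan algebra, or the torus of Example~\ref{ex:torus} referenced in the introduction), where $]a,b[$ genuinely meets the points at infinity $X\setminus V$, or where the affine slice is disconnected. \textbf{The main obstacle} is precisely this last point: cleanly certifying that $]a,b[$ escapes $V$ in the "$b\not<a$ and $a\not<b$" case requires understanding $a^\top\cap X$, i.e. which points of $X\setminus V$ lie in a given affine chart, which is geometry-of-$X(V)$ bookkeeping rather than order theory; I would handle it by a worked low-rank example rather than in general, matching the phrasing of the statement.
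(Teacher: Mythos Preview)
Your proposal is correct and follows essentially the same route as the paper: the parabolic case via Theorem~\ref{th:main} and a suitable symmetry (you use the order-reversing $x\mapsto -x$, the paper uses the $G_0$-element $x\mapsto -x^{-1}$, which is cosmetically different but equivalent), the elliptic case via the cyclic-quadruple Lemma~\ref{la:quadruple} applied to $(a,x,b,\infty)$, and the hyperbolic inclusion via transitivity (the paper phrases this as ``$(b,a,x,\infty)$ is a cyclic quadruple'', which is the same computation). Your handling of the ``in general'' clauses by deferring to Example~\ref{ex:torus} also matches the paper, which likewise does not argue these in general.
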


\begin{proof}
(1): by Theorem \ref{th:main}, $]0,\infty[=\Omega$, and applying
$g = \tau_a$ (translation), the first part of the claim follows. 
For the second part, by translation, we may assume $b=0$. Then $g(x)=-x^{-1}$ is a composition of two inversions, hence belongs
to $G_0$, it sends $]0,\infty[$ to $]\infty,0[$, and also $\Omega$ to  $ - \Omega$, whence $]\infty,0[=-\Omega$. 

To prove (2), assume
$a,x,b \in V$.
Then by Lemma \ref{la:quadruple} the following are equivalent, 

\ssk
($a<b$ and $x \in ]a,b[$),

($(a,b,\infty) \in R$ and $(a,x,b) \in R$),

$(a,x,b,\infty)$ is a cyclic quadruple,

($(a,x,\infty) \in R$ and $(\infty,x,b) \in R$),

($x \in a +\Omega$ and $x \in b - \Omega$), 

\ssk

\nin giving the claim.  As to case (3), in general not much can be said.
Let's  just consider the special case $b<a$, i.e., $(b,a,\infty) \in R$. 
Assume first $x \in a+\Omega$, so $(a,x,\infty) \in R$. Then $(b,a,x,\infty)$ is a cyclic quadruple, hence
$(a,x,b) \in R$, whence $x \in ]a,b[$.
Likewise, when $x \in b-\Omega$, it follows that $x \in ]a,b[$. 
But one cannot reverse these arguments since Lemma \ref{la:quadruple} does not apply in the situation
$(b,a,\infty) \in R, (a,x,b) \in R$. 
\end{proof}

Working in finite dimension over $\R$, one may decompose the set $U_{ab} \cap V$ into topological connected 
components, and $]a,b[ \cap V$ will be the union of certain of them. However, the algebraic equations  of 
$V \setminus U_{ab}$ are polynomial of high degree, and the image will in general be very complicated. 
The following example gives a slight impression, in a  simple situation:

\begin{example}[pco on the $n$-torus]\label{ex:torus}
Let $V =\K =  \R$. Then $X = \R \PP¨^1$,  which will be identified with the
circle. The cyclic order can be represented by Figure \ref{f:1}. The Jordan chart $V$ is given by stereographic projection,
but it's easier just to identify $S^1$ with $\R / 2 \Z$, and $V$ with $]-1,1[$. 
An elliptic image of an  interval is $]a,b[$ with $-1<a<b<1$, a parabolic one $]a,1[$ or $]-1,b[$, and a hyperbolic one
$]a,1[\cup ]-1,b[ = ]a,\infty[\cup ]\infty,b[$ with $b<a$. 

\ssk
Now let $V = \R^n$ with cone $\Omega =\{ x \in \R^n \mid \forall i : x_i >0\}$. This is a direct product of copies of $\R$, and
the {\em $n$-torus} $(S^1)^n = \R^n / 2 \Z^n$ is its Jordan geometry.
The pco on the $n$-torus is the $n$-fold direct product of the cyclic order described above.
Affine images can be drawn by
considering $a,b$ in the open $n$-cube $]-1,1[^n$. 
(Again, the Jordan chart arises by stereographic projection in each component, stretching the $n$-cube to all of $\R^n$.)
When $a<b$, then we are in the elliptic case (cf.\ Figure \ref{f:2}).
Consider the case $b<a$: then the affine image of $]a,b[$ is given by
$$
]a,b[\cap V = 
\prod_{i=1}^n ]a_i,b_i[ = \prod_{i=1}^n \bigl( ]a_i,\infty_i[ \cup ]\infty_i,b_i[ \bigr) =
\prod_{i=1}^n \bigl( ]a_i,1[ \cup ]-1,b_i[ \bigr) ,
$$
leading to a union of $2^n$ affine cubes. In the general case, say with $k$ components $b_i <a_i$, the same argument leads
to a union of $2^k$ affine cubes. 
Figure \ref{f:3} shows, for $n=2$, first the affine image of the cone $\Omega = ]0,1[^n$ (parabolic), second, the hyperbolic case
$b<a$, and third, the hyperbolic case $a_1<b_1, b_2 < a_2$.
\begin{figure}[h]
\caption{Three affine  images of an interval in a $2$-torus}\label{f:3}
\newrgbcolor{ffxfqq}{1. 0.4980392156862745 0.}
\psset{xunit=1.0cm,yunit=1.0cm,algebraic=true,dimen=middle,dotstyle=o,dotsize=5pt 0,linewidth=0.8pt,arrowsize=3pt 2,arrowinset=0.25}
\begin{pspicture*}(-5.944128022950112,-0.4073633896218)(7.688238282393222,2.497743342935735)
\pspolygon[linecolor=ffxfqq,fillcolor=ffxfqq,fillstyle=solid,opacity=0.1](1.3864011433972339,1.2976147026387934)(1.3864011433972339,2.)(2.,2.)(2.,1.2976147026387934)
\pspolygon[linecolor=ffxfqq,fillcolor=ffxfqq,fillstyle=solid,opacity=0.1](1.3864011433972339,0.)(1.3864011433972339,0.5469534137346596)(2.,0.5469534137346596)(2.,0.)
\pspolygon[linecolor=ffxfqq,fillcolor=ffxfqq,fillstyle=solid,opacity=0.1](0.,2.)(0.,1.2976147026387934)(0.8323416206346599,1.2976147026387934)(0.8323416206346599,2.)
\pspolygon[linecolor=ffxfqq,fillcolor=ffxfqq,fillstyle=solid,opacity=0.1](0.,0.5469534137346596)(0.,0.)(0.8323416206346599,0.)(0.8323416206346599,0.5469534137346596)
\pspolygon[linecolor=ffxfqq,fillcolor=ffxfqq,fillstyle=solid,opacity=0.1](5.7473857741736225,1.243996039145641)(6.319318184767247,1.243996039145641)(6.319318184767247,2.)(5.7473857741736225,2.)
\pspolygon[linecolor=ffxfqq,fillcolor=ffxfqq,fillstyle=solid,opacity=0.1](5.7473857741736225,0.36822453542415146)(6.319318184767247,0.36822453542415146)(6.319318184767247,0.)(5.7473857741736225,0.)
\pspolygon[linecolor=ffxfqq,fillcolor=ffxfqq,fillstyle=solid,opacity=0.1](-4.,1.)(-4.,2.)(-3.,2.)(-3.,1.)
\psline[linecolor=ffxfqq](1.3864011433972339,1.2976147026387934)(1.3864011433972339,2.)
\psline[linecolor=ffxfqq](1.3864011433972339,2.)(2.,2.)
\psline[linecolor=ffxfqq](2.,1.2976147026387934)(1.3864011433972339,1.2976147026387934)
\psline[linecolor=ffxfqq](1.3864011433972339,0.)(1.3864011433972339,0.5469534137346596)
\psline[linecolor=ffxfqq](1.3864011433972339,0.5469534137346596)(2.,0.5469534137346596)
\psline[linecolor=ffxfqq](2.,0.5469534137346596)(2.,0.)
\psline[linecolor=ffxfqq](0.,1.2976147026387934)(0.8323416206346599,1.2976147026387934)
\psline[linecolor=ffxfqq](0.8323416206346599,1.2976147026387934)(0.8323416206346599,2.)
\psline[linecolor=ffxfqq](0.8323416206346599,2.)(0.,2.)
\psline[linecolor=ffxfqq](0.,0.5469534137346596)(0.,0.)
\psline[linecolor=ffxfqq](0.8323416206346599,0.)(0.8323416206346599,0.5469534137346596)
\psline[linecolor=ffxfqq](0.8323416206346599,0.5469534137346596)(0.,0.5469534137346596)
\psline[linecolor=ffxfqq](5.7473857741736225,1.243996039145641)(6.319318184767247,1.243996039145641)
\psline[linecolor=ffxfqq](6.319318184767247,1.243996039145641)(6.319318184767247,2.)
\psline[linecolor=ffxfqq](6.319318184767247,2.)(5.7473857741736225,2.)
\psline[linecolor=ffxfqq](5.7473857741736225,2.)(5.7473857741736225,1.243996039145641)
\psline[linecolor=ffxfqq](5.7473857741736225,0.36822453542415146)(6.319318184767247,0.36822453542415146)
\psline[linecolor=ffxfqq](6.319318184767247,0.36822453542415146)(6.319318184767247,0.)
\psline[linecolor=ffxfqq](5.7473857741736225,0.)(5.7473857741736225,0.36822453542415146)
\psline(0.,0.)(2.,0.)
\psline(2.,0.)(2.,2.)
\psline(2.,2.)(0.,2.)
\psline(0.,0.)(0.,2.)
\psline(5.,0.)(7.,0.)
\psline(5.,0.)(5.,2.)
\psline(5.,2.)(7.,2.)
\psline(7.,0.)(7.,2.)
\psline(-5.,0.)(-3.,0.)
\psline(-3.,0.)(-3.,2.)
\psline(-3.,2.)(-5.,2.)
\psline(-5.,2.)(-5.,0.)
\psline(-4.,1.)(-4.,2.)
\psline(-4.,2.)(-3.,2.)
\psline(-3.,2.)(-3.,1.)
\psline(-4.,1.)(-3.,1.)
\psline[linecolor=ffxfqq](-4.,1.)(-4.,2.)
\psline[linecolor=ffxfqq](-4.,2.)(-3.,2.)
\psline[linecolor=ffxfqq](-3.,2.)(-3.,1.)
\psline[linecolor=ffxfqq](-3.,1.)(-4.,1.)
\begin{scriptsize}
\psdots[dotstyle=*](0.8323416206346599,0.5469534137346596)
\rput[bl](0.8680873962967599,0.7793009555383194){{$b$}}
\psdots[dotstyle=*](1.3864011433972339,1.2976147026387934)
\rput[bl](1.4936384703835368,1.01164849734198){{$a$}}
\psdots[dotstyle=*](5.7473857741736225,1.243996039145641)
\rput[bl](5.568656895863112,0.8507925068625226){{$a$}}
\psdots[dotstyle=*](6.319318184767247,0.36822453542415146)
\rput[bl](6.462301287415651,0.4933347502415063){{$b$}}
\end{scriptsize}
\end{pspicture*}
\end{figure}
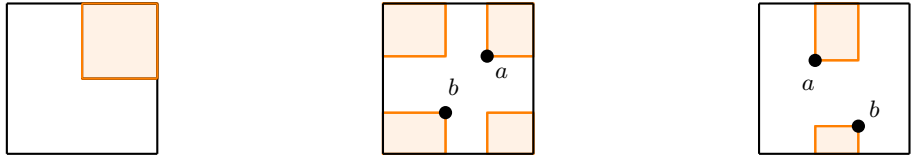
\end{example}

\begin{example}[Lorentz cones]
In case of the Euclidean Jordan algebra $V = \R^{n-1,1}$, with symmetric cone the Lorentz cone, the Jordan geometry is
the projective quadric given by a form of signature $(n,2)$ in
in $\R^{n+2}$. Since the cone is given by quadratic equations $q(x)>0,x_0>0$, one can give explicit descriptions of
all three types of images, by looking at signs of $q(x-a)$ and $q(x-b)$. The intersection of the null-cones $q(x-a)q(x-b)=0$
is given by ellipsoids and hyberboloids in a hyperplane of $V$, which leads to pictures having a basic structure like those
of Figure \ref{f:3}, with circular cones replacing the right angles at $a$ and $b$.
\end{example}

\begin{definition}\label{def:intervaltopology}
With notation and assumptions as in Theorem \ref{th:main}, the {\em interval topology}, or {\em order topology},
on $X$ is the topology generated by all intervals $]a,b[$, with $(a,b) \in X^2$ such that $a \top b$. 
\end{definition}

It is clear from the definition that $\Cau(X,R)$ acts by homeomorphisms on $X$. 

\begin{theorem}\label{th:topology}
When $V$  is a Euclidian Jordan algebra, or a Jordan-Banach algebra, then the interval topology coincides with
the usual topology on $X$, and by restriction, it also coincides with the usual topology on $V$.
\end{theorem}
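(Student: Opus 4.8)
The plan is to prove the two topologies coincide on the affine chart $V$ first, and then transfer the statement to $X$ using the transitive action of the structure/conformal group by homeomorphisms for both topologies. The key point on $V$ is that, for a Euclidean (or Jordan--Banach) algebra, the symmetric cone $\Omega$ is open in the norm topology and is a base of neighbourhoods of $0$ after rescaling and intersecting with $-\Omega$-translates. More precisely: the elliptic intervals $]a,b[=(a+\Omega)\cap(b-\Omega)$ with $a<0<b$ in $V$, by Theorem~\ref{th:affineimage}(2), form a family of convex open (in the usual topology) neighbourhoods of $0$; taking $b=-a=\epsilon e$ one gets the ``order-balls'' $(-\epsilon e+\Omega)\cap(\epsilon e-\Omega)$, and in a Euclidean Jordan algebra these shrink to $\{0\}$ as $\epsilon\to 0$ and each contains a usual ball (since $\Omega$ is open and $e$ is interior to $\Omega$), so the two neighbourhood filters at $0$ agree. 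Translating by $t_v$ (a homeomorphism for the usual topology and, by construction of $R$, an automorphism of the interval topology) gives agreement at every point of $V$, hence the two topologies on $V$ coincide.

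Next I would pass from $V$ to $X$. Here I would use that $X$ is covered by the affine charts $a^\top$ for $a\in X$, that each such chart is the image $g(V)$ for some $g\in G_0$ (conjugate of the standard chart), and that $G_0$ acts on $X$ by homeomorphisms for the usual topology (this is classical in the Euclidean/Banach case, where $X$ is a manifold and $G_0$ acts smoothly/real-analytically) and, by Definition~\ref{def:intervaltopology} together with Theorem~\ref{th:main}(1), also by homeomorphisms for the interval topology. Since the interval topology restricted to $V$ equals the usual one, and both topologies are invariant under $G_0$, the two topologies agree on each chart $g(V)$; as these charts cover $X$, the topologies agree globally. One should also check that the interval topology is not coarser than claimed, i.e. that each basic open set $]a,b[$ is open in the usual topology: for $b=\infty$ this is $a+\Omega$, open since $\Omega$ is; for the elliptic case it is $(a+\Omega)\cap(b-\Omega)$, again open; for the remaining (hyperbolic) transversal pairs $]a,b[\cap V$ is a union of connected components of the usual-open set $U_{ab}\cap V$, hence usual-open, and $]a,b[$ itself is usual-open in $X$ because, near a point at infinity, one passes to another chart where the same description applies. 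Conversely every usual-open set is a union of order-balls by the local argument above.

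The main obstacle I anticipate is the hyperbolic case and the behaviour near ``points at infinity'' of a given chart: showing that $]a,b[$, which in a Euclidean algebra of rank $>1$ genuinely meets $X\setminus V$, is open in the usual topology on $X$ requires either invoking the explicit manifold structure of $X$ (Grassmannian/Lagrangian etc.) and the fact that inversions $J^{uv}_w$ are real-analytic diffeomorphisms, or arguing chart-by-chart that the local description of $]a,b[$ is always ``one of the three affine types'' up to an element of $G_0$, so always usual-open. I would handle this by the remark following Theorem~\ref{th:affineimage}: in finite dimension $]a,b[\cap V$ is a union of connected components of $U_{ab}\cap V$, and $U_{ab}$ is the complement of a proper analytic subset, hence open; combined with $G_0$-invariance this upgrades to openness of $]a,b[$ in $X$. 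In the Banach case one replaces ``connected components of the complement of an analytic set'' by the corresponding open-map statement for the (continuous, even analytic) maps $t_v,\tilde t_v,Q_y$; I expect the argument goes through verbatim but with more care needed to know that $\Omega$ is open and that order-balls form a neighbourhood base, which is exactly the content of the Banach theory of symmetric cones (\cite{Up}).
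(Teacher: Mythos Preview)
Your proposal is correct and follows essentially the same route as the paper: reduce to the affine chart $V$, identify the elliptic intervals $]-\epsilon e,\epsilon e[$ with the balls of the spectral norm (hence equivalent to the usual norm balls), and use that $G_0$ (resp.\ $\Cau(X,R)$) acts by homeomorphisms in both topologies to globalize and to handle the hyperbolic case. The only simplification the paper makes over your write-up is in the ``main obstacle'': rather than analyzing $]a,b[\cap V$ via connected components of $U_{ab}\cap V$, it simply notes that every interval is a $G_0$-image of $\Omega$, which is already known to be open in the usual topology of $X$, so all intervals are usual-open at once.
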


\begin{proof}
Recall from \cite{BeNe05} that topologies on $X$ correspond to topologies on $V$ having certain invariance properties,
and then all chart domains $U_a$ are open in $X$. 
Thus it suffices to compare the usual topology of $V$ with
the interval topology generated by the affine images of intervals on $V$.
Now,  elliptic images of intervals are precisely  the unit balls of the spectral norm, and since the spectral norm
is equivalent to the usual norm, it follows that  the usual topology is contained in the interval topology.
To prove the converse, it suffices to note that all affine images (the hyperbolic ones included) are open in the usual topology.
But this is clear since $\Cau(X,R)$ acts by homeomorphisms, and we have already seen that elliptic images are open
in $V$, hence in $X$. 
\end{proof}

In the general case, the interval topology need not be Hausdorff:

\begin{example}\label{ex:TX}
Let $V$ be Euclidean (or JB), and consider its ``tangent algebra''
$TV:= V \oplus \eps V$ with $\eps^2 = 0$ (scalar extension by dual numbers $\R[\eps]$).
Then $TV$ is a poJa with cone $T\Omega = \Omega + \eps V$ (rather a wedge), and $X(TV)=TX$ is the {\em  tangent
bundle} of $X$.  Intervals are all of the form
$]A,B[ = T (]a,b[)$ with $a=\pi(A), b=\pi(B)$ the images of $A,B$ under the canonical projection $TX\to X$. 
Thus points $u,v \in T_x X$ living in the same tangent space of $X$ cannot be seperated by the interval topology.
\end{example} 

The interval topology should be a valuable tool when attacking the following problems.

\section{Open problems and further topics}\label{sec:further}

\subsection{Tube domains, bounded symmetric domains}
Basic definitions related to these topics carry over to our general setting: 
 any por and any poJa admit  a ``complexification''
($V[i]$: scalar extension by $\K[i]$, cf. example \ref{exa:C}), and all basic definitions and constructions in principle 
continue to make sense: we may define the {\em  tube-domain}  $T_\Omega = V +i\Omega \subset V[i]$, on which
the translation group and the group $G(\Omega)$ act. 
Now, it is natural to ask: {\em is $T_\Omega$ a symmetric domain, in the sense that about any point there is a
``holomorphic'' symmetry, turning $T_\Omega$ into a symmetric space?}
As in the classical case, the question reduces to: {\em does inversion at $i$, given by $z \mapsto - z^{-1}$,
define a bijection  $T_\Omega \to T_\Omega$? is every element of $T_\Omega$ invertible in $V[i]$?}
The proofs given in the classical cases (\cite{FK94, Up}) use analytic arguments, and hence do not carry
over to general poJa's. 
However, the answer is certainly positive for the $\Q$-poJa $\Sym(n,\Q)$
(since $a+ib$ invertible as real matrix implies it is also invertible as rational matrix), and it is also positive for the case of
$P^*$-algebras (example \ref{ex:P*}; cf.\ \cite{Be17}). 
So the problem is: {\em give a necessary and sufficient condition, in terms of Jordan geometry and order
theory, for $T_\Omega$ to be a symmetric domain.}

\ssk
Via the {\em Cayley transform}, the preceding problem translates to the question whether the `Hermitian'', or
``twisted complexification'' (cf.\  \cite{Be00, BeNe04, Be14}), of a bounded interval remains bounded.

\ssk
Results of Koufany on the {\em compression semigroup of a symmetric cone}, $\Gamma := \{ g \in \Cau(X,R) \mid g(]a,b[)\subset ]a,b[ \}$
(cf.\ \cite{K}, Th\'eor\`eme 4.9), carry over (cf.\ also Prop.\  \ref{prop:compr}); but again it is not clear what can be said about compression semigroups of tube domains or
of Hermitifications of intervals. 
As shown in \cite{FG96}, convexity is important in these questions --
for non-convex cones, the corresponding statements are false: their twisted
complexifications always have ``points at infinity''.

\subsection{Compact dual, symmetric $R$-spaces, Borel imbedding}
By the general theory from \cite{Be00, Be14}, 
the $c$-dual(=``Cartan dual'') symmetric space of any $U_{ab}$ can be realized inside the same
Jordan geometry $X(V)$. 
Now, the symmetric space  $U_{ab} = \Omega$, a convex cone, should be thought of as being of ``non-compact type''.
Thus its $c$-dual, written $ \Omega^c$,  should be thought of as being ``compact-like''. When 
do we have equality $X(V) = \Omega^c$? (For Euclidean Jordan algebras this holds since both spaces are compact: we realize
 $X$ as a ``symmetric $R$-space''.)
 When do we have, at least, inclusion $\Omega \subset \Omega^c$ ? (``Borel imbedding'')

\subsection{The boundary of intervals, and its structure}
First of all, find the correct definition of the ``maximal cone'', that is, extend the partial order on $V$, and the cyclic 
order on $X$, also to non-transveral pairs or triples.
A little care is needed here: the first guess might be take the closure $\overline \Omega$
 of $\Omega$ with respect to the interval
topology, but that cone might be too big. One should at least exclude the closure of $\{ 0 \}$
(because of non-Hausdorff cases such as the one from Example \ref{ex:TX}), so possibly work with the
cone
$$
C := \overline \Omega \setminus \overline{\{ 0 \}}. 
$$
This should define a cone such that
$C \cap C =\emptyset$, $Q_y(C) = C$ for all invertible $y$, $C+C \subset C$, which are the basic properties needed
to globalize it along the lines of proof of Theorem \ref{th:main}. 
The ternary relation on $X$ thus obtained will be asymmetric and transitive, but
it is not clear at all whether it will be cyclic.
Once the correct definition of the ``boundary'' $\partial \Omega := C \setminus \Omega$ being
clear, one will wish  to analyze it according to the model of the classical theory (see \cite{FK94, Up}).

\subsection{Relation with generalized cross-ratios, and Maslov index}
The classical cross-ratio $[a,b;c,d]$ of a quadruple of points on the real projective line is positive if, and only if,
$a$ and $b$ belong both to $]c,d[$ or both to $]d,c[$, and negative iff $(c,d)$ ``separates'' $(a,b)$
(one of them is in $]a,b[$ and the other in $]b,a[$).
There are generalizations of the cross-ratio for Jordan geometries (due to Kantor, and others), and there should be analogs
of this property. Similarly, it should be possible to characterize the cyclic order by a generalized Maslov index
(see \cite{NO}), whenever this index is defined.


\subsection{Structure theory; traces and states}
poJa's and their morphisms form a very rich category: it contains the whole variety of members, from nilpotent 
to simple ones. Thus one may try to tidy up by developing a structure theory in the usual algebraic sense.
For instance, what is the relation between
 {\em semisimple} poJa's and the {\em formally real} ones (in the sense of Def.\ \ref{def:formallyreal})? 
Note that 
 a general poJa may not admit traces, nor have non-trivial ``states''. 
It is part of  structure theory to give the correct definitions here, and to clarify the role of 
analogs of Euclidean Jordan algebras, Jordan-Hilbert, and Jordan-Banach algebras in the general setting. 


\subsection{About ``dual cones'' and ``symmetric cones''}\label{ssec:dualcone}
As said in the introduction, ``duality'' in the context of general Jordan structures means something different from
the functional analytic approach via duality of topological vector spaces, and likewise, ``dual cones'' living in the
topological dual of $V$ play no r\^ole at this stage of the theory. Nevertheless, {\em duality} is an important aspect of
Jordan theory, which (in my opinion) is best understood if one widens the scope from Jordan algebras to general
{\em Jordan pairs}. 
Our general ``symmetric cones'', as defined here, seem very well to be {\em self-dual} in some more
abstract geometric sense. 
I don't know how to formulate this property in a purely order-theoretic and ``cone-theoretic'' way, but essentially
it should express that a cone $\Omega$ should better be seen as an ``interval'', as the elliptic image on the right of
Figure \ref{f:2}, and not as the ``parabolic'' image on the left. The elliptic image really is complete, whereas the
parabolic image sort of hides half of the boundary. This suggests to call a cone, or an interval,
{\em symmetric} if it
 is symmetric about any of its points: there is an order-reversing symmetry
of $\Omega$ fixing that point.
 Such ``inversions'' extend to a common completion $X$, and then the axioms of Jordan geometries from
 \cite{Be14} are quite natural geometric compatibility conditions.
Thus ``symmetric intervals'' would be, indeed, closely tied to  Jordan geometries.

\end{document}